\renewcommand{\epsilon}{\varepsilon}
\newcommand{\newsection}[1]
{\subsection{#1}\setcounter{theorem}{0} \setcounter{equation}{0}
\par\noindent}
\newtheorem{theorem}{Theorem}
\newtheorem{lemma}[theorem]{Lemma}
\newtheorem{corr}[theorem]{Corollary}
\newtheorem{proposition}[theorem]{Proposition}
\newtheorem{deff}[theorem]{Definition}
\newcommand{\bth}{\begin{theorem}}
\newcommand{\ble}{\begin{lemma}}
\newcommand{\bcor}{\begin{corr}}
\newcommand{\bdeff}{\begin{deff}}
\newcommand{\bprop}{\begin{proposition}}
\newcommand{\ele}{\end{lemma}}
\newcommand{\ecor}{\end{corr}}
\newcommand{\edeff}{\end{deff}}
\newcommand{\eprop}{\end{proposition}}
\newcommand{\cd}{\, \cdot\, }
\newcommand{\Rn}{{\mathbb R}^n}
\newcommand{\la}{\lambda}
\newcommand{\e}{\varepsilon}
\renewcommand{\Pi}{\varPi}
\renewcommand{\Re}{\rm{Re} \,}
\renewcommand{\Im}{\rm{Im} \,}
\renewcommand{\epsilon}{\varepsilon}
\newcommand{\sgn}{{\text {sgn}}}
\newcommand{\R}{{\mathbb R}}
\newcommand{\Z}{{\mathbb Z}}
\newcommand{\C}{{\mathbb C}}
\newcommand{\Hn}{{\Bbb H}^n}
\newcommand{\1}{{\rm 1\hspace*{-0.4ex}%
\rule{0.1ex}{1.52ex}\hspace*{0.2ex}}}
\begin{document}

\title[$L^p$ resolvent estimates for simply connected manifolds]
{Concerning $L^p$ resolvent estimates for 
simply connected manifolds of constant curvature}
\thanks{The first author was visiting Johns Hopkins University while this research was
carried out, supported by the China Scholarship Council. The second was supported in part by the NSF, DMS- 1361476}

\author{Shanlin Huang}
\address{Department of Mathematics, Huazhong University of Science and Technology,  Wuhan, China}
\author{Christopher D. Sogge}
\address{Department of Mathematics,  Johns Hopkins University,
Baltimore, MD 21218}

\begin{abstract}
We prove families of uniform $(L^r,L^s)$ resolvent estimates for simply connected manifolds of constant curvature  (negative or positive) that imply the earlier ones for Euclidean space of Kenig, Ruiz and the second author \cite{KRS}.  In the case of the sphere we take advantage of the fact that the half-wave group of the natural shifted Laplacian is periodic.  In the case of hyperbolic space, the key ingredient is a natural variant
of the Stein-Tomas restriction theorem.

\end{abstract}

\maketitle

\newsection{Introduction and main results}

In a paper of Kenig, Ruiz and the second author \cite{KRS}, it was shown that
for each $n\ge3$ if $1<r<s<\infty$ are Lebesgue exponents satisfying
\begin{equation}\label{i.1}
n(\tfrac1r-\tfrac1s)=2\quad \text{and } \, \,
\min \, \bigl(|\tfrac1r-\tfrac12|, \, |\tfrac1s-\tfrac12|\bigr)>\tfrac1{2n},
\end{equation}
then there is a uniform constant $C_{r,s}<\infty$ so that
\begin{equation}\label{i.2}
\|u\|_{L^s(\Rn)}\le C\bigl\| (\Delta_{\Rn}+\zeta)u\bigr\|_{L^r(\Rn)},
\quad u\in C^\infty_0(\Rn),
\end{equation}
where $\Delta_{\Rn}=\tfrac{\partial^2}{\partial x_1^2}
+\cdots +\tfrac{\partial^2}{\partial x_n^2}$ denotes the standard Laplacian
on Euclidean space.  The first condition in \eqref{i.1} is dictated by
scaling and the other part of \eqref{i.1} was also shown in \cite{KRS} to be
necessary for \eqref{i.2}.

Euclidean space of course is the unique simply connected manifold of constant
curvature equal to zero.  The purpose of this paper is to prove sharp
theorems for simply connected manifolds of constant curvature either
$+\kappa$ or $-\kappa$, $\kappa>0$, which naturally imply \eqref{i.2}
when $\kappa \searrow 0$.  In the case of positive curvature $+\kappa$, we
recall that the manifold is the sphere endowed with the metric
which in geodesic polar coordinates $r\theta$, $\theta \in S^{n-1}$ about
any point is given by
\begin{equation}\label{i.3}ds^2=dr^2+\Bigl(\frac{\sin \sqrt\kappa r}{\sqrt \kappa}\Bigr)^{2}
|d\theta|^2, 
\quad  0<r<\tfrac{\pi}{\sqrt\kappa}, 
\end{equation}
while in the case of constant negative curvature $-\kappa$, $\kappa>0$,
it is $\Rn$ endowed with the metric which in any geodesic normal coordinate
system takes the form
\begin{equation}\label{i.4}
ds^2=dr^2+\Bigl(\frac{\sinh \sqrt{-\kappa}r}{\sqrt{-\kappa}}\Bigr)^{2}|d\theta|^2, \quad 0<r<\infty,
\end{equation}
(see e.g., \cite{Chavel}).   The volume elements associated
with these metrics then of course are
\begin{equation}\label{i.5}
dV_\kappa  =\Bigl(\frac{\sin \sqrt\kappa r}{\sqrt\kappa}\Bigr)^{n-1}dr d\theta,
\quad 0<r<\tfrac\pi{\sqrt\kappa}, \, \, \kappa>0,
\end{equation}
and
\begin{equation}\label{i.5'}
dV_{-\kappa}  =\Bigl(\frac{\sinh \sqrt\kappa r}{\sqrt\kappa}\Bigr)^{n-1}dr d\theta,
\quad 0<r<\infty, \, \, -\kappa<0,
\end{equation}
respectively.  The Laplacian associated to the metrics in these coordinates
then is simply given by
\begin{equation}\label{i.6}\Delta_\kappa=\partial_r^2+(n-1)\sqrt\kappa \cot (\sqrt\kappa r) \partial_r
+(\sqrt\kappa \csc (\sqrt\kappa r))^2\Delta_{S^{n-1}}, \quad \kappa>0,
\end{equation} 
and
\begin{equation}\label{i.7}
\Delta_{-\kappa}=\partial_r^2+(n-1)\sqrt\kappa \coth(\sqrt\kappa r)\partial_r
+(\sqrt\kappa \, \text{csch} (\sqrt\kappa r)^2\Delta_{S^{n-1}}, \quad -\kappa<0.
\end{equation}
When $\kappa=1$, we in the case of the standard round unit sphere and
write $dV_{S^n}=dV_1$ and $\Delta_{S^n}=\Delta_1$, while for $\kappa=-1$,
we are in standard hyperbolic space and write $dV_{-1}=dV_{\Hn}$ and
$\Delta_{-1}=\Delta_{\Hn}$.

We can now state our main results.

First, for the case of constant positive curvature we have the following

\begin{theorem}\label{posresolve}
Let ${\mathcal R}\subset \C$ be the region given by all $\zeta\in \C$
satisfying
\begin{equation}\label{i.8}
{\mathcal R}=\{\zeta\in \C: \, \Re \zeta \le (\Im \zeta)^2\}.
\end{equation}
Then for every $n\ge 3$ and $1<r<s<\infty$ satisfying \eqref{i.1} there
is a constant $C_{r,s}$ so that
\begin{multline}\label{i.9}
\|u\|_{L^s(S^n,dV_{S^n})}\le C_{r,s}
\bigl\|\bigl((\Delta_{S^n}-(\tfrac{n-1}2)^2)+\zeta\bigr)u
\bigr\|_{L^r(S^n, dV_{S^n})}, \\ u\in C^\infty(S^n), \, \, \zeta \in {\mathcal R}.
\end{multline}
For such exponents $r,s$ and the same constant $C_{r,s}$, we also have
that for any $\kappa>0$
\begin{multline}\label{i.10}
\|u\|_{L^s(S^n,dV_\kappa)}\le C_{r,s}
\bigl\|\bigl((\Delta_{\kappa}-\kappa (\tfrac{n-1}2)^2)+\zeta\bigr)u
\bigr\|_{L^r(S^n, dV_{\kappa})}, \\ u\in C^\infty, \, \, 
\zeta \in {\mathcal R}_\kappa = \kappa {\mathcal R},
\end{multline}
where $\kappa {\mathcal R}=\{\kappa \zeta: \, \zeta\in {\mathcal R}\}$ is
the $\kappa$-dilate of ${\mathcal R}$.
\end{theorem}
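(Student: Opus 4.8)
The plan is to recast \eqref{i.9} as a uniform $L^r\!\to\!L^s$ resolvent bound for the shifted Laplacian, exploit that the corresponding half-wave group is periodic to reduce matters to the wave flow over a single period---indeed over a caustic-free sub-interval of it---and then compare the resulting short-time operator with the Euclidean resolvent. To start, \eqref{i.10} reduces to \eqref{i.9} by scaling: the manifold underlying $dV_\kappa$ is the unit sphere with metric equal to $\kappa^{-1}$ times that of $(S^n,dV_{S^n})$, so $\Delta_\kappa=\kappa\Delta_{S^n}$ and $\|u\|_{L^p(dV_\kappa)}=\kappa^{-n/(2p)}\|u\|_{L^p(dV_{S^n})}$; rewriting \eqref{i.10} in terms of $dV_{S^n}$ therefore produces an overall power $\kappa^{\,1-\frac n2(\frac1r-\frac1s)}$, which equals $1$ by the first identity in \eqref{i.1}, while $\mathcal R_\kappa=\kappa\mathcal R$ transforms so that \eqref{i.10} becomes \eqref{i.9} with $\zeta$ replaced by $\zeta/\kappa$. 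Now set $P=\bigl(-\Delta_{S^n}+(\tfrac{n-1}2)^2\bigr)^{1/2}$, so that $(\Delta_{S^n}-(\tfrac{n-1}2)^2)+\zeta=-(P^2-\zeta)$ and $P$ has eigenvalues $\nu_k=k+\tfrac{n-1}2$, $k\ge0$. These form an arithmetic progression of gap $1$, whence $e^{2\pi iP}=(-1)^{n-1}I$; and since a spherical harmonic of degree $k$ is multiplied by $(-1)^k$ under the antipodal map $\mathcal A$, in fact $e^{i\pi P}=e^{i\pi(n-1)/2}\,\mathcal A^*$, where $\mathcal A^*u=u\circ\mathcal A$. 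As $\mathcal R$ lies a fixed positive distance from $\mathrm{spec}(P^2)=\{\nu_k^2:k\ge0\}$ (using $\nu_0^2=(\tfrac{n-1}2)^2\ge1$ for $n\ge3$), the operator $(P^2-\zeta)^{-1}$ is defined for every $\zeta\in\mathcal R$, and \eqref{i.9} is the assertion that $\|(P^2-\zeta)^{-1}\|_{L^r(S^n)\to L^s(S^n)}$ is bounded uniformly in $\zeta\in\mathcal R$.

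I would treat separately the regions $|\zeta|\le1$ and $|\zeta|\ge1$. For $|\zeta|\le1$, the set $\{\zeta\in\mathcal R:|\zeta|\le1\}$ is compact and disjoint from $\mathrm{spec}(P^2)$, so $(P^2-\zeta)^{-1}$ is, uniformly in such $\zeta$, a pseudodifferential operator of order $-2$ on $S^n$; it maps $L^r\to W^{2,r}$ with a uniform bound, and $W^{2,r}(S^n)\hookrightarrow L^s(S^n)$ by Sobolev embedding because $n(\tfrac1r-\tfrac1s)=2$ with $1<r$ and $s<\infty$ (only the first half of \eqref{i.1} is used here). For $|\zeta|\ge1$, write $\zeta=z^2$ with $\Re z\ge0$; a direct computation shows that $\zeta\in\mathcal R$ with $|\zeta|\ge1$ forces $\Im z\ge c_0>0$. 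From the identity $(P^2-\zeta)^{-1}=\int_0^\infty\frac{\sin sP}{P}\,e^{isz}\,ds$ (valid for $\Im z>0$) and the fact that $\frac{\sin sP}{P}$ is $2\pi$-periodic up to the sign $(-1)^{n-1}$, summing a geometric series gives
\[
(P^2-\zeta)^{-1}=\frac{1}{\,1-(-1)^{n-1}e^{2\pi iz}\,}\int_0^{2\pi}\frac{\sin sP}{P}\,e^{isz}\,ds,
\]
the prefactor being $O(1)$ since $|e^{2\pi iz}|=e^{-2\pi\Im z}\le e^{-2\pi c_0}$. Splitting $\int_0^{2\pi}=\int_0^\pi+\int_\pi^{2\pi}$ and using $e^{i\pi P}=e^{i\pi(n-1)/2}\mathcal A^*$ to fold $[\pi,2\pi]$ onto $[0,\pi]$ reduces matters---after post-composing with the uniformly bounded operator $\bigl(I-(-1)^{(n-1)/2}e^{i\pi z}\mathcal A^*\bigr)^{-1}$ when $n$ is odd, and analogously when $n$ is even---to proving
\[
\bigl\|\,Q_z\,\bigr\|_{L^r(S^n)\to L^s(S^n)}\le C_{r,s},\qquad Q_z:=\int_0^{\pi}\frac{\sin sP}{P}\,e^{isz}\,ds,\quad \Re z\ge0,\ \Im z\ge c_0.
\]

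The operator $Q_z$ superposes the half-wave propagator only over $s\in(0,\pi)$, where the geodesic flow of $S^n$ has no conjugate points, so $Q_z$ is a nondegenerate Fourier integral operator. Via the Hadamard parametrix on $\{(x,y):d(x,y)<\pi\}$, together with a partition of unity and geodesic normal coordinates, the Schwartz kernel of $Q_z$ is---modulo a smoothing operator whose norm is controlled uniformly in $z$ by the decay $|e^{isz}|=e^{-s\Im z}$ and the bound $\Im z\ge c_0$---a finite sum of localized pieces of the Euclidean resolvent $(-\Delta_{\Rn}-z^2)^{-1}$: the leading one is, for $n$ odd, $(-\Delta_{\Rn}-z^2)^{-1}(x,y)$ with $|x-y|$ replaced by $d(x,y)$ (by strong Huygens on $\Rn$), while the antipodal region $d(x,y)\to\pi$ contributes a further such piece reflected through $\mathcal A$. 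The bound for $Q_z$ thus follows from the uniform $L^r\to L^s$ estimate \eqref{i.2} of \cite{KRS} applied to each piece.

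I expect the last step to be the main obstacle: the uniform-in-$z$ bound for the short-time operator $Q_z$ is an oscillatory-integral (Stein--Tomas/Carleson--Sjölin type) estimate, proved along the lines of \eqref{i.2} in \cite{KRS}---and it is exactly here that the second condition in \eqref{i.1}, namely $\min\bigl(|\tfrac1r-\tfrac12|,|\tfrac1s-\tfrac12|\bigr)>\tfrac1{2n}$, is needed, just as in \cite{KRS}, where it is also shown to be necessary. (In particular, letting $\kappa\searrow0$ in the resulting family of estimates recovers \eqref{i.2}.) Controlling the parametrix and tail errors uniformly in $z$, using only $\Im z\ge c_0$ and never a lower bound on $\Im z$ that degenerates as $|\zeta|\to\infty$, is routine by comparison. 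The periodicity of the half-wave group is what makes the whole scheme run: it lets one replace the global problem on the compact manifold $S^n$ by a short-time, caustic-free problem modeled on Euclidean space, and it is also the reason the optimal region for \eqref{i.9} is the parabola-bounded $\mathcal R$, which just barely excludes the positive real axis where $\mathrm{spec}(P^2)$ lies.
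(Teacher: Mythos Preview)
Your outline is sound and arrives at the same analytic core as the paper, but the organization is genuinely different.  The paper does \emph{not} sum the geometric series or fold via the antipodal identity $e^{i\pi P}=e^{i\pi(n-1)/2}\mathcal A^*$.  Instead it writes the resolvent via \eqref{i.14}, cuts the time integral at $|t|\le 1$ to get a local piece $R_0^{\la,\mu}$ (Proposition~\ref{prop2.4} supplies its kernel asymptotics, and Proposition~\ref{prop2.2} gives the $L^r\to L^s$ bound), and handles the remaining tail $R_1^{\la,\mu}=\sum_k m_{\la,\mu}(\la_k)H_k$ through the harmonic-projection estimate \eqref{i.15}: $\|H_k\|_{L^r\to L^s}\le C(1+k)$.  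That projection bound is itself proved by a diagonal/antipodal/middle decomposition of $H_k$ (Proposition~\ref{prop2.1} and Lemma~\ref{lemma2.3}), which is exactly the decomposition you would need for your $Q_z$.  So your route collapses two layers of the paper's argument into one: you skip \eqref{i.15} and apply the diagonal/antipodal/middle analysis directly to $Q_z$.  The trade-off is that the paper gets \eqref{i.15} as an independent spectral-projector estimate of interest, whereas your approach is a bit leaner for the resolvent bound alone.

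Two small corrections.  First, after the fold you obtain (for $n$ odd) that the resolvent equals a bounded scalar times $(I+(-1)^{(n-1)/2}e^{i\pi z}\mathcal A^*)\,Q_z$, so it suffices to bound $Q_z$ because $\mathcal A^*$ is an isometry on every $L^p$ and $|e^{i\pi z}|\le 1$; no inverse $(I-c\mathcal A^*)^{-1}$ is needed, and the sign in your displayed operator is off.  Second, the phrase ``localized pieces of the Euclidean resolvent'' is accurate only near the diagonal (and, by reflection, near the antipode); for the middle region $\delta\le d(x,y)\le \pi-\delta$ the kernel of $Q_z$ is not a Euclidean resolvent at all but an oscillatory integral with Carleson--Sj\"olin phase $d_{S^n}(x,y)$, and one must invoke Stein's oscillatory-integral theorem directly (Lemma~\ref{lemma2.3} in the paper) together with the interpolation scheme leading to \eqref{2.31}--\eqref{2.32}, rather than citing \cite{KRS} as a black box.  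You correctly flag this as the main obstacle; the paper's Propositions~\ref{prop2.2} and~\ref{prop2.4} and Lemma~\ref{lemma2.3} are precisely the tools that fill this gap.
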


For the case of constant negative curvature, we have the following

\begin{theorem}\label{negresolve}
Let $n\ge 3$.  Then for every $r,s$ as in \eqref{i.1} there is a constant
$C_{r,s}$ so that
\begin{equation}\label{i.11}
\|u\|_{L^s(\Hn,dV_{\Hn})}\le C_{r,s}\bigl\|\bigl((\Delta_{\Hn}+(\tfrac{n-1}2)^2)+\zeta\bigr)u\bigr\|_{L^r(\Hn,dV_{\Hn})}, 
\, \, \, u\in C^\infty_0, \, \, |\zeta|\ge 1.
\end{equation}
For such exponents $r,s$ and the same constant $C_{r,s}$ we also have
that for each $-\kappa$, $\kappa>0$,
\begin{equation}\label{i.12}
\|u\|_{L^s(\Rn, dV_{-k})}\le C_{r,s}
\bigl\|\bigl((\Delta_{-\kappa}+\kappa (\tfrac{n-1}2)^2)+\zeta\bigr)u\bigr\|_{L^r(\Rn,dV_{-\kappa})}, 
\, \, \, u\in C^\infty_0, \, \, |\zeta|\ge \kappa.
\end{equation}
\end{theorem}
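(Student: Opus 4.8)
The plan is to derive \eqref{i.12} from \eqref{i.11} by a scaling argument and then to prove \eqref{i.11}. For the first step, under the substitution $r\mapsto\sqrt\kappa\,r$ the metric \eqref{i.4} of curvature $-\kappa$ turns into $\kappa^{-1}$ times the metric of $\Hn$, so in these coordinates $\Delta_{-\kappa}=\kappa\,\Delta_{\Hn}$, $dV_{-\kappa}=\kappa^{-n/2}\,dV_{\Hn}$, and
\[
\bigl(\Delta_{-\kappa}+\kappa(\tfrac{n-1}2)^2\bigr)+\zeta=\kappa\Bigl(\bigl(\Delta_{\Hn}+(\tfrac{n-1}2)^2\bigr)+\tfrac\zeta\kappa\Bigr).
\]
Since $n(\tfrac1r-\tfrac1s)=2$, the powers of $\kappa$ coming from the volume normalizations of the $L^r$- and $L^s$-norms cancel the factor $\kappa$ on the right, and because $|\zeta|\ge\kappa\iff|\zeta/\kappa|\ge1$, applying \eqref{i.11} with $\zeta/\kappa$ in place of $\zeta$ yields \eqref{i.12} with the same constant. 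So from now on $\kappa=1$. Put $\rho=\tfrac{n-1}2$ and $P=(-\Delta_{\Hn}-\rho^2)^{1/2}$; as the spectrum of $-\Delta_{\Hn}$ is $[\rho^2,\infty)$, the operator $P$ is self-adjoint with spectrum $[0,\infty)$, and $(\Delta_{\Hn}+\rho^2)+\zeta=\zeta-P^2$. Thus \eqref{i.11} amounts to the uniform bound $\|(\zeta-P^2)^{-1}\|_{L^r(\Hn)\to L^s(\Hn)}\le C_{r,s}$ for $|\zeta|\ge1$ (the case $\zeta\in[1,\infty)$ read via limiting absorption), and since $(\zeta-P^2)^{-1}=\int_0^\infty(\zeta-\la^2)^{-1}\,dE_P(\la)$, whose Schwartz kernel is a constant multiple of $\int_0^\infty(\zeta-\la^2)^{-1}|c(\la)|^{-2}\varphi_\la(\dist(x,y))\,d\la$ with $\varphi_\la$ the elementary spherical function and $c(\la)$ the Harish--Chandra function, everything reduces to $L^p\to L^{p'}$ mapping bounds for convolution by $|c(\la)|^{-2}\varphi_\la$.

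I would then split $|\zeta|\ge1$ into a bounded part $1\le|\zeta|\le4$ and a high-energy part $|\zeta|\ge4$. For bounded $\zeta$ the kernel $G_\zeta$ of $(\zeta-P^2)^{-1}$ has, uniformly, a local singularity $|G_\zeta(\dist(x,y))|\lesssim\dist(x,y)^{2-n}$ for $\dist(x,y)\le1$ — exactly as on $\Rn$, the geometry being locally Euclidean — and exponential decay $|G_\zeta(\dist(x,y))|\lesssim(1+\dist(x,y))^N e^{-\rho\,\dist(x,y)}$ for $\dist(x,y)\ge1$. The local part is controlled by the Hardy--Littlewood--Sobolev inequality on a fixed ball and maps $L^r\to L^s$ for all $1<r<s<\infty$ with $n(\tfrac1r-\tfrac1s)=2$; the global part, which as a convolution kernel lies in $L^q(\Hn)$ for every $q>2$, is controlled by the sharp convolution (Kunze--Stein) inequalities on $\Hn$. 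Hence the bounded-$\zeta$ contribution already obeys \eqref{i.11} with no need for the strip condition — this regime is in fact easier than its Euclidean counterpart, the exponential volume growth of $\Hn$ working in our favour and dominating the oscillatory tail that forces the strip condition on $\Rn$.

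The high-energy part $|\zeta|\ge4$ carries the real difficulty, and here the ``natural variant of the Stein--Tomas restriction theorem'' enters. Following \cite{KRS}, one splits $G_\zeta$ into a part supported where $|\zeta|^{1/2}\dist(x,y)\lesssim1$, on which $|G_\zeta|\lesssim\dist(x,y)^{2-n}$, handled again by Hardy--Littlewood--Sobolev with $L^r\to L^s$ bound uniform in $\zeta$ precisely because $n(\tfrac1r-\tfrac1s)=2$; and an oscillatory part where $|\zeta|^{1/2}\dist(x,y)\gtrsim1$, schematically $|\zeta|^{(n-3)/4}\dist(x,y)^{-(n-1)/2}e^{\pm i|\zeta|^{1/2}\dist(x,y)}e^{-\rho\,\dist(x,y)}$ — the same profile as on $\Rn$ save for the extra exponential factor. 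The oscillatory part is handled by the Stein--Tomas variant: an $L^{r_0}(\Hn)\to L^2(S^{n-1})$ restriction bound, $r_0=\tfrac{2(n+1)}{n+3}$, for the Helgason--Fourier transform $\tilde f(\la,\cdot)$ at a single spectral value $\la$, with the same $\la$-dependence as on $\Rn$ for $\la\gtrsim1$; equivalently, by duality and Harish--Chandra's integral formula for $\varphi_\la$, the bound $\|dE_P(\la)\|_{L^{r_0}(\Hn)\to L^{r_0'}(\Hn)}\lesssim\la^{(n-1)/(n+1)}$ for $\la\ge1$. One proves this from the large-$\la$ asymptotics of $\varphi_\la$ and $c(\la)$ together with the classical Stein--Tomas/oscillatory-integral estimates, the intrinsic factor $e^{-\rho\,\dist(x,y)}$ absorbing the exponential volume growth. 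Feeding the $(r_0,r_0')$ estimate into the argument of \cite{KRS} — a $TT^*$ reduction followed by Stein analytic interpolation against complex powers of $P$ and the Hardy--Littlewood--Sobolev inequality — then yields the high-energy bound over the full range \eqref{i.1}, and it is only here that the strip condition $\min(|\tfrac1r-\tfrac12|,|\tfrac1s-\tfrac12|)>\tfrac1{2n}$ is used. Adding the two contributions proves \eqref{i.11}.

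The main obstacle is the high-energy restriction estimate on $\Hn$: establishing the Stein--Tomas bound uniformly in the large spectral parameter while keeping the exponential volume growth of $\Hn$ under control — which precludes a black-box appeal to the Euclidean restriction theorem or to dispersive estimates — and then splicing it with the bounded-energy bound so that the constant in \eqref{i.11} stays uniform over all $\zeta$ with $|\zeta|\ge1$, in particular as $\zeta$ approaches the spectrum $[0,\infty)$.
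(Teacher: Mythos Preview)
Your scaling reduction from \eqref{i.12} to \eqref{i.11} matches the paper. For \eqref{i.11} you and the paper agree on the decisive new ingredient --- a Stein--Tomas bound for spectral projectors of $P$ on $\Hn$ --- but the execution differs. The paper does not work with the spherical-function kernel or split by the size of $|\zeta|$. Instead it writes $\zeta=(\la+i\mu)^2$ with $\la\ge1$ (disposing of the remaining $\zeta$ by Sobolev estimates, not Kunze--Stein), invokes the wave formula \eqref{i.14}, and decomposes dyadically in the \emph{time} variable: a local piece $S_0$ with $t\lesssim1$, handled by the oscillatory-integral Proposition~\ref{prop2.2} (not merely Hardy--Littlewood--Sobolev), and far pieces $S_k$ with $t\sim2^k$. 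For each $S_k$ the paper proves three concrete endpoint bounds --- $L^2\to L^{\frac{2(n+1)}{n-1}}$ from the Stein--Tomas estimate \emph{at scale} $T=2^k$, and $L^1\to L^\infty$ plus $L^{\frac{2n}{n+1}}\to L^\infty$ directly from the explicit wave-kernel formulas on $\Hn$ --- then real-interpolates to obtain $\|S_k\|_{L^r\to L^s}\lesssim 2^{-k}$, which sums over $k$.

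Your route could in principle be carried out, but two steps are underdetermined. First, the appeal to Kunze--Stein for bounded $\zeta$: a radial kernel lying in $L^q(\Hn)$ for every $q>2$ does not automatically yield $L^r\to L^s$ mapping across the whole open segment in \eqref{i.1}, so you should name the precise convolution inequality and verify the exponent range. Second, and more seriously, ``feeding the $(r_0,r_0')$ spectral-measure bound into the argument of \cite{KRS}'' is exactly the crux and cannot be left as a black box: the na\"{\i}ve integral $\int_0^\infty |\zeta-\la^2|^{-1}\,\|dE_P(\la)\|_{L^{r_0}\to L^{r_0'}}\,d\la$ diverges logarithmically near $\la^2=\Re\zeta$, so some mechanism exploiting cancellation is required --- either the scale-$T$ Stein--Tomas bounds together with a time-side decomposition as the paper does, or a fully specified analytic family on $\Hn$ with an $L^1\to L^\infty$ endpoint coming from the explicit kernel, neither of which you have written down.
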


In the case of $n=3$, as we shall show at the end of \S4, we can uniform obtain bounds of the form \eqref{i.11} or \eqref{i.12} for all $\zeta \in \C$.

Note that, by sending $\kappa$ to zero, it is straightforward to see that either \eqref{i.10} or 
\eqref{i.12} implies \eqref{i.2} for a given pair of exponents
$1<r<s<\infty$ satisfying $n(\tfrac1r-\tfrac1s)=2$. 
(In the case of \eqref{i.10} one uses the fact that ${\mathcal R}_\kappa$ tends
to $\C\backslash \R_+$ as $\kappa$ goes to zero.)
 Therefore, since it was
shown in \cite{KRS}  that the second part of \eqref{i.1} is necessary for the
Euclidean estimate, we conclude that our assumptions on the exponents in
the above theorems dealing with the other constant curvature cases are
also sharp.

It was shown by Bourgain, Shao, Yao and the second author \cite{BSSY} that
when $s=\tfrac{2n}{n-2}$ and $r=\tfrac{2n}{n+2}$ the condition that
$\zeta\in {\mathcal R}$ is sharp in the sense that the inequality
cannot hold when ${\mathcal R}$ is replaced by any region of the form
$\Re \zeta \le a((|\Im \zeta|)(\Im \zeta)^2$, where $a(\la)\to 0$
as $\la \to +\infty$.  Since \eqref{i.9} for any such $r,s$ implies this
special case involving these dual exponents, we conclude that the condition
on ${\mathcal R}$ is sharp.  See Figure~\ref{fig1} below to see the boundary of the
region ${\mathcal R}$ and ${\mathcal R}_\kappa$ when $0<\kappa\ll 1$.  We shall show in \S 2 that the assumption
\eqref{i.1}, as in the Euclidean case, is sharp for $S^n$.  As we shall see,
\eqref{i.10} is a simple consequence of \eqref{i.9}, and as $\kappa\searrow 0$, we can recover \eqref{i.2} from it.  We have stated the results in
Theorem~\ref{posresolve} for the shifted Laplacian on $S^n$ since the spectrum
of $\sqrt{-\Delta_{S^n}+(\tfrac{n-1}2)^2}$ is $\{k+\tfrac{n-1}2\}_{k=0}^\infty$.

\begin{figure}[H]\label{fig1}
\centering
\includegraphics[scale=1.5]{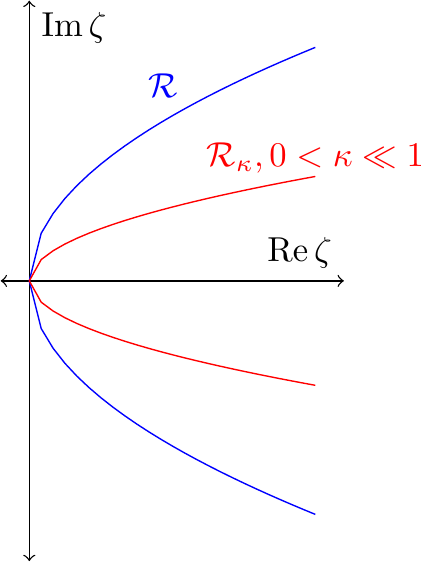}
\caption{Admissible regions for spheres of curvature $\kappa>0$}
\end{figure}

For similar reasons, \eqref{i.11} is stated in terms of the 
shifted Laplacian, $\Delta_{\Hn}+(\tfrac{n-1}2)^2$, due to the fact
that the spectrum of $\sqrt{-\Delta_{\Hn}-(\tfrac{n-1}2)^2}$ is $[0,\infty)$.
Also, as we shall see later, the conditions on $r,s$ in 
Theorem~\ref{negresolve} are necessary, and we can recover \eqref{i.2} from 
\eqref{i.12} by letting $-\kappa \nearrow 0$.

Let us now go over the main ideas in the proof of the theorems.  As in 
\cite{BSSY}, we shall use a formula which relates the resolvent to the solution
of a Cauchy problem for the wave equation involving the shifted Laplacians.
Specifically, if $P^2$ is either the shifted Laplacian $-\Delta_{S^n}+(\tfrac{n-1}2)^2$
or $-\Delta_{\Hn}-(\tfrac{n-1}2)^2$, then we have
\begin{equation}\label{i.14}
\bigl(P^2+(\la+i\mu)^2\bigr)^{-1}
=\frac{\sgn \mu}{i(\la+i\mu)}\int_0^\infty e^{i(\sgn \mu)\la t}
e^{-|\mu|t} \, (\cos tP)\, dt.
\end{equation}
Here one can define $\cos tP$ either via the spectral theorem, or use the fact 
that it gives the unique solution of the Cauchy problem
$$(\partial_t^2-P^2)u(t,x)=0, \, \, u(0,\cdot)=f(\cdot), \, \, \partial_tu(0,\cdot)=0.
$$

In \cite{BSSY} it was shown that favorable restriction type estimates can be used
in conjunction with \eqref{i.14} to prove resolvent estimates.

In the case of the sphere, let $H_k: \, L^2(S^n)\to L^2(S^n)$ denotes projecting onto
spherical harmonics of degree $k=0,1,2,\dots$ (i.e., the projection operator onto the space of eigenfunctions satisfying
$P^2e_k=(k+(\tfrac{n-1}2))^2 \, e_k$.  Then we shall verify the required estimate that
\begin{equation}\label{i.15}
\|H_kf\|_{L^s(S^n)}\le C(1+k)\|f\|_{L^r(S^n)}, \quad
r,s, \, \, \, \text{as in } \, \, \eqref{i.1}.
\end{equation}

In the case of hyperbolic space, we shall show that there
is a constant $C<\infty$ so that one has the uniform bounds
\begin{multline}\label{i.16}
\bigl\| \1_{[\la, \la + \e]}(P)\bigr\|_{L^2(\Hn)\to L^{\frac{2(n+1)}{n-1}}(\Hn)}\le C
\sqrt{\e}\, \la^{\frac{n-1}{2(n+1)}}, 
\quad \la, \e^{-1}\ge 1, \\ 
\text{if } \, \, P=\sqrt{-\Delta_{\Hn}-(\tfrac{n-1}2)^2}.
\end{multline}
Here $\1_{[\la,\la+\e]}$ denotes the indicator function of the interval
$[\la, \la+\e]$, and $\1_{[\la,\la+\e]}(P)$ the function of $P$ defined by this function and the spectral theorem.  If $P$ were the square root of minus the Euclidean
Laplacian and the norms were taken over Euclidean space with Euclidean measure, then the analog of \eqref{i.16} would be equivalent to the Stein-Tomas restriction theorem
for $\Rn$  \cite{Tomas}.  We shall prove \eqref{i.16} by adapting Stein's proof of this result using his complex interpolation scheme.  It seems that the requirement that $\la$ be bounded away from zero is necessary for \eqref{i.16}, unlike its Euclidean counterpart.  Using \eqref{i.16}, following an earlier argument of the second author, 
\cite{Sthesis}, we shall be able to adapt the proof of the Euclidean resolvent
estimates \eqref{i.2} in \cite{KRS} to obtain the hyperbolic variants \eqref{i.11} in 
Theorem~\ref{HST}.

The rest of the paper is organized as follows.  In \S 2 we shall prove Theorem~\ref{posresolve}.  Then \S3 will be devoted to the proof of \eqref{i.16}, and
in \S4 we shall show how they can be used to prove Theorem~\ref{negresolve}, the
uniform $L^r\to L^s$ resolvent estimates for hyperbolic space.  Along the way, we
shall use a few results that are well known to experts and essentially in the
literature.  For the sake of completeness, their proofs will be presented
in an appendix.

\newsection{Uniform resolvent estimates for $S^n$}

In this section we shall give the proof of Theorem~\ref{posresolve}.

First, let us quickly go over some standard facts concerning Fourier analysis
on $S^n$.  Further details can be found in many places, such as in Stein and Weiss~\cite{SW} and \cite{SHang}.

We first recall that we have the orthogonal decomposition
$$L^2(S^n)=\bigoplus^{\infty}_{k=0}{\mathcal H}_k,$$
where ${\mathcal H}_k$ are the spherical harmonics of degree $k=0,1,2,\dots$.  Thus
${\mathcal H}_k$ is the space of restrictions to $S^n$ of harmonic homogeneous polynomials of degree $k$, and so
\begin{equation}\label{2.1}
\bigl(-\Delta_{S^n}+(\tfrac{n-1}2)^2\bigr)e_k(x)=\bigl(k+(\tfrac{n-1}2)\bigr)^2e_k(x),
\quad \text{if } \, e_k\in {\mathcal H}_k,
\end{equation}
and
$$d_k=\dim {\mathcal H}_k=
 \left( \begin{array}{c}
n+k \\
n \end{array} \right)
-
 \left( \begin{array}{c}
n+k -2\\
n \end{array} \right)=\frac{2k^{n-1}}{(n-1)\!}+O(k^{n-2}).
$$
If $\{e_{k,j}\}_{j=1}^{d_k}$ is an orthonormal basis of ${\mathcal H}_k$, then
the projection operator $H_k$ onto this space has kernel
\begin{equation}\label{2.2}
H_k(x,y)=\sum_{k=1}^{d_k}e_{k,j}(x)\overline{e_{k,j}(y)}.
\end{equation}
Since $e_{k,j}$ is the restriction to $S^n$ of a homogeneous polynomial of degree
$k$, we have of course
\begin{equation}\label{2.3}
H_k(x,y^*)=(-1)^kH_k(x,y),
\end{equation}
if
\begin{equation}\label{2.4}
y^*=-y, \quad  y\in S^n\subset {\mathbb R}^{n+1}
\end{equation}
is the antipodal map on $S^n$.

We shall require asymptotics for the kernel $H_k(x,y)$.  The ones we require are
essentially given by the classical Darboux formula for Jacobi polynomials
(see \cite{Sthesis}, \cite{Szego}), but we have not been able to find the exact
results we need in the literature.  Instead, in order to establish them, we shall
use the fact that, since, by \eqref{2.1}, the distinct eigenvalues
\begin{equation}\label{2.5}
\la_k=k+\tfrac{n-1}2, \quad k=0,1,2,\dots,
\end{equation}
of 
\begin{equation}\label{2.6}
P=\sqrt{-\Delta_{S^n}+(\tfrac{n-1}2)^2}
\end{equation}
is one, we have that
\begin{equation}\label{2.7}
H_k=\eta(\la_k-P)=\frac1\pi \int_{-\infty}^\infty \Hat \eta(t)
e^{it\la_k}e^{-itP}\, dt,
\end{equation}
provided that $\eta\in C^\infty_0(\R)$ satisfies
\begin{equation}\label{2.8}
\eta(0)=1, \, \, \, \text{and } \, \eta(\tau)=0, \, \, |\tau|\ge 1/2.
\end{equation}
Since $P$ is positive, we have that $\eta(\la_k+P)=0$, by \eqref{2.8}, if
$k=1,2,\dots$.  Thus, by \eqref{2.7} and Euler's formula, we have
\begin{equation}\label{2.9}
H_k(x,y)=\frac1\pi \int_{-\infty}^\infty \Hat\eta(t)e^{it\la_k}\bigl(\cos tP\bigr)(x,y)
\, dt,
\end{equation}
where $(\cos tP)(x,y)$ denotes the kernel of $\cos tP$.  Note also that, by \eqref{2.5},
we have that
\begin{equation}\label{2.10}
\cos\bigl((t+2\pi)P\bigr)=(-1)^{n-1}\cos tP,
\end{equation}
meaning that $\cos tP$ is $2\pi$-periodic when the dimension $n$ is odd and $4\pi$-periodic when $n$ is even.

As we shall see in the appendix, using the Hadamard parametrix, \eqref{2.3}, \eqref{2.9} and \eqref{2.10} we can easily obtain the following

\begin{proposition}\label{prop2.1}
Let $d_{S^n}(x,y)$ denote geodesic distance on $S^n$.  We then have for $k=0,1,2,\dots$
\begin{equation}\label{2.11}
|H_k(x,y)|\le C(1+k)^{n-1}.
\end{equation}
Moreover, for sufficiently large $k$, we can find functions $a_\pm(k;r)$ so that
\begin{equation}\label{2.12}
H_k(x,y)=\la_k^{\frac{n-1}2}\sum_\pm a_\pm\bigl(k; d_{S^n}(x,y)\bigr)
e^{\pm i\la_kd_{S^n}(x,y)}, \quad\text{if } \, 
\la_k^{-1}\le d_{S^n}(x,y)\le \tfrac{3\pi}4,
\end{equation}
where for every $j=0,1,2,\dots$ 
\begin{equation}\label{2.13}
|\partial_r^ja_\pm(k; r)|\le C_jr^{-j}, \quad\text{if } \, 
\la_k^{-1}\le d_{S^n}(x,y) \le\tfrac{3\pi}4.
\end{equation}
We also have that, for the same functions $a_\pm(k;\, \cdot\, )$,
\begin{multline}\label{2.14}
H_k(x,y)=(-1)^k \la_k^{\frac{n-1}2}\sum_\pm a_\pm\bigl(k; d_{S^n}(x,y^*)\bigr)e^{\pm i d_{S^n}(x,y^*)}, \\ \text{if } \, \tfrac{\pi}4\le d_{S^n}(x,y)\le \pi -\la_k^{-1}.
\end{multline}
\end{proposition}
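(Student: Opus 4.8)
The plan is to extract all four assertions from the half-wave representation \eqref{2.9} and the Hadamard parametrix for $\cos tP$, the key preliminary step being to use the periodicity \eqref{2.10} to fold the $t$-integral into an interval on which the parametrix is valid. Bound \eqref{2.11} is separate and elementary: since $H_k$ is the orthogonal projection onto $\mathcal H_k$, Cauchy--Schwarz gives $|H_k(x,y)|^2\le H_k(x,x)H_k(y,y)$, and by rotational invariance $H_k(x,x)$ equals the constant $\mathrm{vol}(S^n)^{-1}\mathrm{tr}\,H_k=d_k/\mathrm{vol}(S^n)=O(k^{n-1})$.

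For the rest I would first note that \eqref{2.5} gives $e^{2\pi i\la_k}=e^{\pi i(n-1)}=(-1)^{n-1}$, so by \eqref{2.10} the product $e^{it\la_k}(\cos tP)(x,y)$ is $2\pi$-periodic in $t$. Splitting $\R$ in \eqref{2.9} into the intervals $[(2j-1)\pi,(2j+1)\pi)$, translating each to $[-\pi,\pi)$, and using that $\sum_{j\in\Z}\Hat\eta(t+2\pi j)$ reduces to a constant by Poisson summation and \eqref{2.8}, one arrives at $H_k(x,y)=\frac1\pi\int_{-\pi}^{\pi}e^{it\la_k}(\cos tP)(x,y)\,dt$, which I regard as an integral over the circle $\R/2\pi\Z$. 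Fix $x,y$ with $\la_k^{-1}\le d:=d_{S^n}(x,y)\le\frac{3\pi}4$. On this circle the only singularities of $t\mapsto(\cos tP)(x,y)$ are at $t\equiv\pm d$: the injectivity radius of $S^n$ is $\pi$, and near $t\equiv\pi$ the kernel is smooth because there $\cos tP$ is built from $\cos\pi P$ and $\sin\pi P$, each of which is either zero or a multiple of the antipodal pullback and so has kernel supported at $x^*$, which lies at distance $\pi-d\ge\frac\pi4$ from $y$. I then insert $\rho\in C^\infty_0((-\pi,\pi))$ with $\rho\equiv1$ on $[-\frac{7\pi}8,\frac{7\pi}8]$ and split the integral into its $\rho$- and $(1-\rho)$-parts. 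On $\supp(1-\rho)$ the kernel and all of its $t$-derivatives are bounded uniformly in $x,y$ (no geodesic from $x$ to $y$ has length lying in that $t$-set, modulo $2\pi$), so repeated integration by parts in $t$ --- with no boundary terms, by periodicity --- shows the $(1-\rho)$-part is $O(\la_k^{-N})$ for every $N$.

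On $\supp\rho$, where $|t|<\pi$ and $x\ne y$, I substitute the Hadamard parametrix $(\cos tP)(x,y)=\sum_{\nu\le N}w_\nu(x,y)\,\mathcal E_\nu(t,d)+R_N(t,x,y)$: by rotational invariance the Hadamard coefficients $w_\nu$ for the shifted operator $P$ are smooth bounded functions of $d$ on $[0,\frac{3\pi}4]$ with bounded derivatives, the model kernel $\mathcal E_\nu(t,r)$ has for $r>0$ the oscillatory representation $\sim\sum_\pm r^{-\frac{n-1}2}\int_0^\infty e^{i\theta(\pm r-t)}\theta^{\frac{n-1}2-\nu}\,\tilde{b}^{\pm}_{\nu}(\theta r)\,d\theta$ modulo a term smooth in $(t,r)$, with $\tilde{b}^{\pm}_{\nu}$ classical symbols of order $0$, and $R_N$ gains regularity as $N\to\infty$, so its contribution is negligible after integration by parts. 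For the main terms I would carry out the $t$-integral first, producing the factor $\Hat\rho(\theta-\la_k)$, which is rapidly decreasing; hence the $\theta$-integral concentrates near $\theta=\la_k$, in particular inside the range $\theta r\gtrsim\la_k r\ge1$ where the symbol asymptotics hold, and after the substitution $\theta=\la_k+\sigma$ (the Taylor corrections dropping out because $\rho\equiv1$ is flat at $\pm d$) it yields a contribution of the form $\la_k^{\frac{n-1}2}\sum_\pm a_\pm(k;d)e^{\pm i\la_kd}$ with $a_\pm(k;r)$ an asymptotic sum assembled from the $w_\nu$ and the $\tilde{b}^{\pm}_{\nu}(\la_k r)$. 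Since each $w_\nu$ has bounded derivatives and $|\partial_r^j[\tilde{b}^{\pm}_{\nu}(\la_k r)]|\le C_j r^{-j}$, one reads off the symbol bounds \eqref{2.13} (with the factor $r^{-(n-1)/2}$ from $\mathcal E_\nu$ carried inside $a_\pm$), giving \eqref{2.12}--\eqref{2.13} for all large $k$.

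Finally \eqref{2.14} follows immediately: for $\frac\pi4\le d_{S^n}(x,y)\le\pi-\la_k^{-1}$ the antipode $y^*$ satisfies $d_{S^n}(x,y^*)=\pi-d_{S^n}(x,y)\in[\la_k^{-1},\frac{3\pi}4]$, so \eqref{2.12} applies with $y$ replaced by $y^*$, and \eqref{2.3} turns $H_k(x,y^*)$ into $(-1)^kH_k(x,y)$. I expect the only genuinely delicate point to be the bookkeeping in the third paragraph --- keeping the remainders of the Hadamard expansion and of the stationary-phase localization uniform in $(x,y)$ all the way down to $d=\la_k^{-1}$, where one is only marginally inside the regime $\la_k d\gtrsim1$ in which the symbol expansion of $\mathcal E_\nu$ is valid. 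The folding, the estimate of the $(1-\rho)$-part, and the deduction of \eqref{2.14} are routine.
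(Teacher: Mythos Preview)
Your proposal is correct and follows essentially the same route as the paper's proof in the appendix: fold the half-wave representation \eqref{2.9} using periodicity, split off the piece near $|t|=\pi$, handle the main piece by the Hadamard parametrix (what the paper calls \cite[Lemma~5.1.3]{FIO}), and deduce \eqref{2.14} from \eqref{2.3}. Your observation that the product $e^{it\la_k}\cos tP$ is always $2\pi$-periodic (so odd and even $n$ can be treated at once, and the periodized weight is actually constant by Poisson summation and \eqref{2.8}) and your smoothness argument near $t=\pi$ via the antipodal structure are mild streamlinings of the paper's parity-by-parity treatment, but the architecture is the same.
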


In order to prove the desired bounds \eqref{i.15} for the harmonic projection operators
and also to be able to prove uniform estimates for a localized version of the
resolvent operators in \eqref{i.14}, we require bounds for certain simi-classical
Fourier integral operators (a.k.a. singular oscillatory integral operators).  To state
them for the generality we shall require throughout, let us assume that $g$ is
a smooth Riemannian metric on $\Rn$ which is close to the Euclidean one.  Assume
further that the injectivity radius of $g$ is ten or more and let
$d_g(x,y)$ be the Riemannian distance function (well defined near the diagonal) and
$B_r(x)=\{y\in \Rn: \, d_g(x,y)<r\}$ if, say, $0<r<10$.  Then, as we shall see
in the appendix, by using Stein's oscillatory integral theorem \cite{St} and H\"ormander's \cite{Hos}, it is not difficult
to obtain the following

\begin{proposition}\label{prop2.2}
Let $g$ be as above and assume that
$$a(x,y)\in C^\infty\bigl(B_2(0)\times B_2(0)\backslash \{(x,x): \, x\in B_2(0)\bigr)$$
satisfies
\begin{equation}\label{2.15}
|a(x,y)|\le C\bigl(d_g(x,y)\bigr)^{2-n}, \quad\text{if } \, \,
d_g(x,y)\le \la^{-1},
\end{equation}
and for every multi-index $\alpha$ we have
\begin{equation}\label{2.16}
|\partial^\alpha_{x,y}a(x,y)|\le C_\alpha \la^{\frac{n-3}2}\bigl(d_g(x,y)\bigr)^{-\frac{n-1}2-|\alpha|}, \quad d_g(x,y)\ge \la^{-1},
\end{equation}
when $x,y\in B_2(0)$. We then have that if $r,s$ are as in \eqref{i.1}
\begin{multline}\label{2.17}
\Bigl\|\, \int e^{i\la d_g(x,y)}a(x,y) f(y)\, dy \, \Bigr\|_{L^s(B_1(0))}
\\
\le C_{r,s}\|f\|_{L^r(B_1(0))}, \quad
f\in C^\infty_0(\Rn), \, \, \, \text{supp }f\subset B_1(0).
\end{multline}
Additionally, if $g$ is sufficiently close to the Euclidean metric in the $C^\infty$
topology, the constant $C_{r,s}$ in \eqref{2.17} depends only on the constant $C$ in
\eqref{2.15} and finitely many
of the constants in \eqref{2.16}.
\end{proposition}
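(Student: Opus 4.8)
The plan is to establish \eqref{2.17} by splitting the operator into a
``local'' piece, where $d_g(x,y)\le \la^{-1}$, and a ``global'' piece, where
$d_g(x,y)\ge \la^{-1}$, and treating the two by quite different arguments.
For the local piece, the amplitude $a(x,y)$ is bounded by $(d_g(x,y))^{2-n}$ on
the region $d_g(x,y)\le \la^{-1}$, which is comparable to the kernel of a
fractional integration operator restricted to a ball of radius $\la^{-1}$.  On
this region the oscillatory factor $e^{i\la d_g(x,y)}$ has phase of size $O(1)$
and is harmless, so the operator is dominated in absolute value by convolution
(in the $g$-metric) with $|x-y|^{2-n}\1_{|x-y|\le \la^{-1}}$.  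By Young's
inequality this maps $L^r\to L^s$ with norm $\lesssim \la^{n(1-\frac1r-(1-\frac1s))}
\cdot\la^{-2}\cdot\la^{\dots}$; more precisely, since $\frac1r-\frac1s=\frac2n$
forces the convolution kernel to be in $L^q$ with $\frac1q=1-\frac2n$ precisely
at the scale $\la^{-1}$, one gets a bound that is $O(1)$ uniformly in $\la$.  (If
$g$ is close enough to Euclidean this comparison is uniform in the metric, giving
the last assertion for the local part.)

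For the global piece, where $d_g(x,y)\ge \la^{-1}$ and the amplitude obeys the
symbol-type bounds \eqref{2.16}, the idea is to scale so that the oscillation
becomes unit-frequency and then apply the two oscillatory integral theorems cited
in the statement.  Rescaling $x\mapsto \la^{-1}x$, $y\mapsto \la^{-1}y$ turns
$e^{i\la d_g(x,y)}$ into an oscillatory kernel of the form
$e^{i|x-y|_{g_\la}}$ (with $g_\la$ the rescaled metric, still a smooth metric of
large injectivity radius) times an amplitude of size $|x-y|^{-\frac{n-1}2}$ with
matching derivative bounds.  There are two regimes.  When $d_g(x,y)$ is bounded
below by a fixed constant (equivalently, after rescaling, $|x-y|\gtrsim\la$), the
operator is a genuine Fourier integral operator of order $-\frac{n-1}2$ associated
to the canonical relation of the half-wave flow for the metric $g$, restricted to
a fixed time separated from $0$; here one invokes H\"ormander's $L^p$ estimates
for such FIOs (the ``local smoothing at fixed time'' / variable-coefficient
Stein--Tomas type bounds), which give the $L^r\to L^s$ mapping with the claimed
exponents \eqref{i.1} — note the second condition in \eqref{i.1} is exactly the
$\frac1{2n}$ gap needed for the non-endpoint FIO bound.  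When instead
$\la^{-1}\le d_g(x,y)\lesssim 1$, one does a dyadic decomposition
$d_g(x,y)\sim 2^j\la^{-1}$, $0\le j\lesssim \log\la$, rescales each piece to unit
scale, and on each piece the amplitude becomes a symbol of the right size; the
rescaled operator is then of the Carleson--Sj\"olin / Stein type with a
nondegenerate phase (the mixed Hessian of $d_g$ is nonvanishing off the diagonal
by the injectivity-radius hypothesis), so Stein's oscillatory integral theorem
\cite{St} applies and, crucially, the gain from each rescaling is a positive power
of $2^{-j}$ (coming from the difference between the scaling exponent and the
Lebesgue exponents), so the geometric series in $j$ sums to a uniform constant.

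The main obstacle I anticipate is the bookkeeping of exponents in the dyadic sum
for the global piece: one must check that on the annulus $d_g(x,y)\sim 2^j\la^{-1}$
the rescaled amplitude and phase satisfy hypotheses of Stein's theorem
\emph{uniformly} in $j$ and in $\la$, and that the resulting per-piece bound
decays geometrically in $j$ rather than merely being bounded.  This decay is
precisely where the strict inequality in the second part of \eqref{i.1} is used:
at the scaling-critical exponents the sum would only be $O(\log\la)$, but the
extra room $\min(|\frac1r-\frac12|,|\frac1s-\frac12|)>\frac1{2n}$ converts the
borderline FIO/oscillatory bound into one with an $\epsilon$ of decay per dyadic
scale.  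A secondary technical point is verifying that the phase $d_g(x,y)$ has the
nondegeneracy (nonvanishing mixed Hessian, and the cone/fold structure for the
H\"ormander regime) needed to invoke \cite{St} and \cite{Hos}; this follows from
standard facts about the distance function of a metric with injectivity radius
$\ge 10$ on $B_2(0)$, which is why that hypothesis was imposed, and the uniformity
in $g$ near the Euclidean metric follows because all these constants depend
continuously on finitely many derivatives of $g$.
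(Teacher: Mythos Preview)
Your strategy—fractional integration for $d_g\le\la^{-1}$, then dyadic decomposition, rescaling, and oscillatory integral bounds for $d_g\ge\la^{-1}$—is exactly the paper's approach. Two refinements are needed.

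For the local piece, Young's inequality does not apply: the truncated kernel $(d_g(x,y))^{2-n}\1_{d_g(x,y)\le\la^{-1}}$ fails to lie in $L^{n/(n-2)}$ (it is only in weak $L^{n/(n-2)}$), so one must invoke Hardy--Littlewood--Sobolev, as the paper does.

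For the dyadic pieces, Stein's theorem by itself does not give geometric decay at the exponents \eqref{i.1}. Stein's estimate \eqref{2.26} lives on the curve $q=\tfrac{n+1}{n-1}p'$; at the point $A=(\tfrac{n+1}{2n},\tfrac{(n-1)^2}{2n(n+1)})$ the rescaled shell $S^k_\la$ does decay like $2^{-2k/(n+1)}$, but $A$ is \emph{off} the line $n(\tfrac1r-\tfrac1s)=2$. Interpolating with the trivial $L^{2n/(n+1)}\to L^\infty$ bound (which grows like $2^{(n-3)k/2}$) one reaches the endpoint $\alpha=(\tfrac{n+1}{2n},\tfrac{n-3}{2n})$ with the decay and growth exactly cancelling, leaving only an $O(1)$ bound there. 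The genuine decay on the line \eqref{i.1} comes from H\"ormander's nondegenerate-phase theorem \cite{Hos} applied at the self-dual point $(\tfrac{n+2}{2n},\tfrac{n-2}{2n})$, where each shell obeys a $2^{-k/n}$ bound; interpolating that against the $O(1)$ bound at $\alpha$ (and then using duality for the other half of the segment) yields $\sigma_{r,s}>0$ throughout the \emph{open} segment—this is precisely where the strict inequality in \eqref{i.1} enters. So Stein's and H\"ormander's theorems are both applied to \emph{every} dyadic shell, at different exponent pairs in an interpolation scheme, rather than allocated to separate spatial regimes $d_g\gtrsim1$ versus $\la^{-1}\lesssim d_g\lesssim1$ as in your outline.
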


Let us now see how we can use these two results to prove \eqref{i.15}, which says that
\begin{equation}\label{2.18}
\|H_k\|_{L^r(S^n)\to L^s(S^n)}\le C_{r,s}(1+k),
\end{equation}
if $r,s$ are as in \eqref{i.1}.  By compactness, it suffices to show that
\begin{equation}\label{2.19}
\|H_kf\|_{L^s(S^n)}\le C_{r,s}(1+k)\|f\|_{L^r(S^n)}, \quad
\text{if } \, \text{supp } f\subset B_1(x_0),
\end{equation}
with constant $C_{r,s}$ independent of the center $x_0\in S^n$ of the unit-radius ball.

If we choose $\alpha\in C^\infty(\R_+)$ satisfying
\begin{equation}\label{2.20}
\alpha(r)=1, \, \, r\le \delta, \quad \text{and } \, \,
\alpha(r)=0, \, \, r\ge 2\delta,
\end{equation}
and let
$$\tilde H_k(x,y)=\alpha(d_{S^n}(x,y))H_k(x,y),$$
then clearly, if $\delta>0$ is small enough, by the above Propositions, the
integral operator with this kernel $\tilde H_k$ satisfies
\begin{equation}\label{2.21}
\|\tilde H_k f\|_{L^s(S^n)}\le (1+k)\|f\\_{L^r(S^n)}, \quad 
\text{supp } \, f\subset B_1(x_0).
\end{equation}
Similarly, since the map $x\to x^*$ mapping $S^n$ to itself preserves the volume
element, we have that 
\begin{equation}\label{2.22}
\|\tilde H^*_kf\|_{L^s(S^n)}\le C_{r,s}(1+k)\|f\|_{L^r(S^n)}, \quad
\text{supp } f\subset B_1(x_0).
\end{equation}

The remaining piece
$$T_k=H_k-\tilde H_k-\tilde H^*_k,$$
by Proposition~\ref{prop2.1}, is an oscillatory integral operator
of the form
$$T_kh(x)=(1+k)^{\frac{n-1}2}\sum_\pm 
\int_{S^n} \alpha_\pm(k; x,y)e^{\pm i\la_k d_{S^n}(x,y)} \, h(y)\, dV_{S^n}(y),
$$
where
$$\alpha_{\pm}(k;x,y)=0, \quad d_{S^n}(x,y)\notin [\delta,\pi-\delta],$$
and with bounds independent of $k$,
$$|\nabla^\beta_{x,y}\alpha_\pm(k; x,y)|\le C_\beta .$$
Therefore, the desired bounds for it are a consequence of the following
special case of Stein's oscillatory integral theorem \cite{St}.

\begin{lemma}\label{lemma2.3}
Let $(M,g)$ be an $n\ge 2$ dimensional Riemannian manifold.  Assume that the
injectivity radius of $M$ is larger than $R$ and that $M$ is either compact
or of bounded geometry,
and let $d_g(\cd, \, \cd)$ be the associated Riemannian distance function.  Assume further that $\alpha\in C^\infty(M\times M)$
satisfies (in terms of covariant derivatives)
\begin{equation}\label{2.23}
|\nabla_x^{\beta_1}\nabla_y^{\beta_2}\alpha(x,y)|\le C_{\beta_1,\beta_2},
\end{equation}
and
\begin{equation}\label{2.24}
\alpha(x,y)=0, \quad \text{if } \, d_g(x,y)\notin [\delta,R-\delta],
\end{equation}
for some $\delta>0$.  Then if we set
\begin{equation}\label{2.25}
I_\la f(x)=\int e^{i\la d_g(x,y)}\alpha(x,y) \, f(y)\, dV_g(y),
\end{equation}
we have 
\begin{equation}\label{2.26}
\|I_\la f\|_{L^q(M)}\le C\la^{-\frac{n}q}\|f\|_{L^p(M)},
\end{equation}
where $C$ depends on finitely many of the constants in \eqref{2.23} and
\begin{equation}\label{2.27}
1\le p\le 2, \quad q=\tfrac{n+1}{n-1}p', \, \, \tfrac1p+\tfrac1{p'}=1.
\end{equation}
\end{lemma}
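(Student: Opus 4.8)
The plan is to deduce this from the classical oscillatory integral estimate of Stein, in the form later made precise by Greenleaf--Seeger and Mockenhaupt--Seeger--Sogge (or directly from \cite{St} in the endpoint case), applied locally in a cover of the support of the kernel by coordinate charts. The key structural point is that, because of \eqref{2.24}, the phase function $d_g(x,y)$ is smooth and \emph{nondegenerate} on the set $\{(x,y): \delta \le d_g(x,y)\le R-\delta\}$ where the amplitude lives: the mixed Hessian $\partial^2_{x_iy_j} d_g(x,y)$ has rank $n-1$ there (this is the statement that the sphere $\{y: d_g(x,y)=c\}$, $\delta\le c\le R-\delta$, has nonvanishing Gaussian curvature as a hypersurface, which holds for metrics near Euclidean and in general away from conjugate points, hence below the injectivity radius). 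Equivalently, writing $d_g(x,y)$ locally one sees that $x\mapsto \nabla_y d_g(x,y)$ parametrizes (a piece of) the unit cosphere bundle, so the Carleson--Sj\"olin/H\"ormander nondegeneracy hypotheses are met.

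First I would use a partition of unity to reduce to the case where $\alpha(x,y)$ is supported in $U\times V$ for small coordinate balls $U,V\subset\R^n$, at distance comparable to some fixed $c\in[\delta,R-\delta]$; finitely many such pieces suffice since $M$ is compact or of bounded geometry, and the number and the constants are controlled by finitely many of the $C_{\beta_1,\beta_2}$ in \eqref{2.23} together with bounds on the metric. In each chart $I_\la$ becomes an oscillatory integral operator $\int e^{i\la \phi(x,y)} a(x,y) f(y)\,dy$ with $\phi$ smooth with the rank $n-1$ mixed Hessian condition and $a\in C_0^\infty$. Second, I would invoke the sharp fixed-time estimate for such operators: for $1\le p\le 2$ and $q=\frac{n+1}{n-1}p'$ one has the bound $\|\int e^{i\la\phi}a f\,dy\|_{L^q}\le C\la^{-n/q}\|f\|_{L^p}$. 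This is exactly Stein's theorem \cite{St} at its natural scaling; the $L^2\to L^{\frac{2(n+1)}{n-1}}$ endpoint ($p=2$) is the Carleson--Sj\"olin estimate, and the full range $1\le p\le 2$ follows by interpolating that endpoint with the trivial $L^1\to L^\infty$ bound $\|\cdot\|_{L^\infty}\le C\|f\|_{L^1}$, which gives $\la^{-(n-1)/2}$ there—one checks the exponents $(\tfrac1p,\tfrac1q)$ lie on the segment joining $(\tfrac12,\tfrac{n-1}{2(n+1)})$ to $(1,0)$ precisely when $q=\frac{n+1}{n-1}p'$, and that the power of $\la$ interpolates to $-n/q$. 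Finally I would sum the finitely many chart contributions and transfer back to $M$, noting $dV_g$ is comparable to Lebesgue measure in each chart with constants depending only on the metric, to obtain \eqref{2.26}.

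The main obstacle, and the only point requiring care, is verifying the rank condition on the mixed Hessian of $d_g$ uniformly on $\{\delta\le d_g\le R-\delta\}$ and with constants depending only on the geometry (so that the interpolation constants are uniform over the cover and, in the application, over the base point $x_0$). For the round sphere or for metrics $C^\infty$-close to Euclidean this is elementary—one can compute $\nabla_y d_g(x,y)$ explicitly or perturb off the Euclidean case $d(x,y)=|x-y|$, where $\partial^2_{xy}|x-y|$ has rank $n-1$ away from $x=y$. In the stated generality of bounded geometry below the injectivity radius it follows from the absence of conjugate points along minimizing geodesics, via the second variation formula (the Hessian of $d_g(x,\cdot)$ at $y$ is the shape operator of the geodesic sphere, which is positive definite restricted to the tangent plane of the sphere when there are no conjugate points). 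Once this is in hand, everything else is a routine localization-and-interpolation argument and the dependence of $C$ on finitely many of the $C_{\beta_1,\beta_2}$ is automatic from the proof of Stein's theorem.
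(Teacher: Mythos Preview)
Your approach is essentially the same as the paper's: both reduce to Stein's oscillatory integral theorem by verifying that the phase $d_g(x,y)$ satisfies the Carleson--Sj\"olin nondegeneracy condition on the support of $\alpha$ (the paper does this in one line, invoking Gauss' lemma and citing Corollary~2.2.3 in \cite{FIO}, while you spell out the localization, the rank-$(n-1)$ mixed Hessian, and the interpolation). One minor slip: the trivial $L^1\to L^\infty$ bound is $O(1)$, not $\la^{-(n-1)/2}$, since the kernel has modulus $|\alpha(x,y)|\le C$; but as $\la^{-n/q}\big|_{q=\infty}=1$ this is exactly what the interpolation requires, so your argument goes through unchanged with the corrected endpoint.
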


Since, by Gauss' lemma the phase function $d_g(x,y)$ of the oscillatory integral
in \eqref{2.26} satisfies the $n\times n$ Carleson-Sj\"olin condition defined
in \cite[\S2.2]{FIO} on the support of $\alpha$ (by \eqref{2.24}), \eqref{2.26}
follows from Corollary~2.2.3 in \cite{FIO}.

To see why this yields our claim that
\begin{equation}\label{2.28}
\|T_kf\|_{L^s(S^n)}\le C(1+k)\|f\|_{L^r(S^n)},
\end{equation}
where $r,s$ are as in \eqref{i.1},  we first note that, by Proposition~\ref{prop2.1},
$$I_{k}=(1+k)^{-\frac{n-1}2}T_k$$
is as in Lemma~\ref{lemma2.3}.  Therefore, if we choose
$$p=\tfrac{2n}{n+1}, \quad q=\tfrac{n+1}{n-1}p'=\tfrac{2n(n+1)}{(n-1)^2},$$
we have, by \eqref{2.26},
\begin{equation}\label{2.29}
\|T_kf\|_{L^{\frac{2n(n+1)}{(n-1)^2}}(S^n)}\le C(1+k)^{\frac{n-1}2}(1+k)^{-\frac{(n-1)^2}{2(n+1)}}=C(1+k)^{\frac{n-1}{n+1}}\|f\|_{L^{\frac{2n}{n+1}}(S^n)}.
\end{equation}

We also have the trivial bounds
\begin{equation}\label{2.30}
\|T_kf\|_{L^\infty(S^n)}\le C(1+k)^{\frac{n-1}2}\|f\|_{L^{\frac{2n}{n+1}}(S^n)}.
\end{equation}
Since 
$$\frac{n-3}{2n}=\frac{(n-1)^2}{2n(n+1)}\cdot \frac{(n+1)(n-3)}{(n-1)^2},$$
$$1-\frac{(n+1)(n-3)}{(n-1)^2}=\frac4{(n-1)^2},$$
and
$$1=\frac{n-1}{n+1}\cdot \frac{(n+1)(n-3)}{(n-1)^2}+\frac{n-1}2\cdot \frac4{(n-1)^2},
$$
if we interpolate between \eqref{2.29} and \eqref{2.30}, we conclude that
\begin{equation}\label{2.31}
\|T_k\|_{L^{\frac{2n}{n-3}}(S^n)}\le C(1+k)\|f\|_{L^{\frac{2n}{n+1}}(S^n)}.
\end{equation}
Since, by the same argument, the adjoint of $T_k$ also enjoys these bounds, we
conclude that we also have that
\begin{equation}\label{2.32}
\|T_kf\|_{L^{\frac{2n}{n-1}}(S^n)}\le C(1+k)\|f\|_{L^{\frac{2n}{n+3}}(S^n)}.
\end{equation}

The estimate \eqref{2.29} corresponds to the point $A$ in Figure~\ref{fig2} below,
and the last two estimates correspond to the points $\alpha$ and $\alpha'$, respectively.  The points $(\tfrac1r, \tfrac1s)$ on the open segment connecting
these two points correspond to the exponents in \eqref{i.1}, and so, by
interpolation, \eqref{2.31} and \eqref{2.32} yield \eqref{2.28}, which completes
the proof of \eqref{2.18}.

\begin{figure}[H]\label{fig2}
\centering
\includegraphics[scale=1.0]{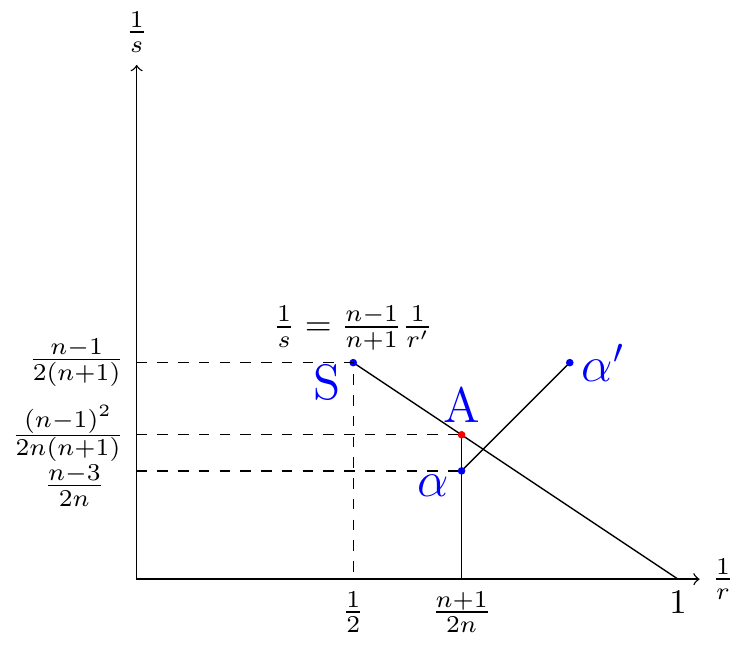}
\caption{Interpolation argument}
\end{figure}

Let us now focus on the proof of Theorem~\ref{posresolve}.  We require the following
result whose proof we postpone until the appendix since it was essentially established
in \cite{DKS}, \cite{BSSY} and \cite{Sef}.  Its proof is a routine stationary
phase calculation.

\begin{proposition}\label{prop2.4}
Fix an even function $0\le \rho \in C^\infty_0(\R)$ satisfying
$$ \rho(t)=1, \, \, |t|\le 1/2, \quad\text{and } \, \, 
 \rho(t)=0, \, \, \, |t|\ge1.$$
Then if $P=\sqrt{-\Delta_{S^n}+(\tfrac{n-1}2)^2}$ and $n\ge 3$ and we set
\begin{equation}\label{2.33}
R_0^{\la,\mu}f=\frac{\text{sgn }\mu}{i(\la+i\mu)}\int_0^\infty  \rho(t) e^{i(\sgn \mu)\la t}
e^{-|\mu|t} \, (\cos tP)\, dt,
\end{equation}
the kernel can be written as
\begin{equation}\label{2.34}
\sum_\pm a_\pm(\la;x,y)e^{\pm i\la d_{S^n}(x,y)} +O\bigl((d_{S^n}(x,y))^{2-n}\bigr),
\end{equation}
where $a_\pm$ vanishes when $d_{S^n}(x,y)$ is close to $\pi$, and moreover,
satisfies \eqref{2.15}-\eqref{2.16} with constants independent of $\la \ge 1$.
\end{proposition}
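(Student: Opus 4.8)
The plan is to reduce $R_0^{\la,\mu}$ to the Hadamard parametrix for $\cos tP$ on $\{|t|\le1\}$ and then read off its kernel by a stationary‑phase computation. Since the kernel of $\cos tP$ is real while $\rho$ is real and even in $t$, replacing $\mu$ by $-\mu$ in \eqref{2.33} conjugates the kernel of $R_0^{\la,\mu}$, which only interchanges the two summands in \eqref{2.34}; so I may assume $\mu>0$ and rewrite $R_0^{\la,\mu}f=\tfrac1{i(\la+i\mu)}\int_0^\infty\rho(t)\,e^{i(\la+i\mu)t}(\cos tP)f\,dt$. Because $\rho$ is supported in $\{|t|\le1\}$ while the injectivity radius of $S^n$ is $\pi>1$, only the parametrix on that set intervenes; recall (this is standard, and is recalled in the appendix; cf.\ \cite{DKS,BSSY,Sef}) that for a fixed large $N$,
\[
(\cos tP)(x,y)=\sum_{\nu=0}^{N}u_\nu(x,y)\,\partial_tE_\nu\!\bigl(t,d_{S^n}(x,y)\bigr)+r_N(t,x,y),\qquad |t|\le1,
\]
where the $E_\nu(t,r)$ are the Riesz kernels of the half‑wave parametrix on $\Rn$ (homogeneous distributions supported in $\{|t|\ge r\}$ and conormal to $\{t=\pm r\}$, with a singularity of type $(t^2-r^2)^{\nu-(n-1)/2}$ there, $E_0$ being the forward fundamental solution of $\partial_t^2-\Delta_{\Rn}$ with data $(0,\delta_0)$), $u_0(x,y)=\bigl(d_{S^n}(x,y)/\sin d_{S^n}(x,y)\bigr)^{(n-1)/2}$ is the leading Hadamard coefficient (smooth and $=1+O(d_{S^n}^2)$ on $\{d_{S^n}\le1\}$, the zeroth‑order shift in $P^2$ affecting only the $u_\nu$ with $\nu\ge1$), and $r_N\in C^{N-c_n}$ with all seminorms bounded uniformly over $\{|t|\le1\}$.

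Inserting this expansion into the integral, the remainder contributes $\tfrac1{i(\la+i\mu)}\int_0^1\rho\,e^{i(\la+i\mu)t}r_N\,dt$, which with all of its $(x,y)$‑derivatives is $O(\la^{-1})$ and so is absorbed into the $O(d_{S^n}^{2-n})$ term (recall $d_{S^n}<\pi$). For the $\nu$‑th parametrix term I note first that if $d:=d_{S^n}(x,y)\ge1$ then $E_\nu(t,d)=0$ on $\supp\rho$ by finite propagation speed, so that term vanishes; hence the $a_\pm$ produced below are supported in $\{d<1\}$ and in particular vanish when $d$ is close to $\pi$. For $0<d<1$, I integrate by parts once in $t$: there is no boundary contribution at $t=0$ because $E_\nu(0,d)=0$, and the factor $i(\la+i\mu)$ appearing when $\partial_t$ falls on $e^{i(\la+i\mu)t}$ exactly cancels the prefactor $\tfrac1{i(\la+i\mu)}$, leaving $-\,u_\nu(x,y)\int_0^\infty\rho(t)\,e^{i(\la+i\mu)t}E_\nu(t,d)\,dt$ together with the term in which $\partial_t$ falls on $\rho$, which keeps a factor $(\la+i\mu)^{-1}$, is supported in $\{1/2\le t\le1\}$, and is therefore of strictly lower order. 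Substituting $t=d+s$ and using $E_\nu(d+s,d)=c_{n,\nu}(2d)^{\nu-(n-1)/2}s^{\nu-(n-1)/2}\bigl(1+\tfrac{s}{2d}\bigr)^{\nu-(n-1)/2}$, the surviving integral becomes $c_{n,\nu}(2d)^{\nu-(n-1)/2}e^{i(\la+i\mu)d}$ times an integral of the shape $\int_0^\infty(\text{cutoff})\,s^{\nu-(n-1)/2}e^{i(\la+i\mu)s}\,ds$; computing the Fourier transform of this homogeneous distribution (the cutoff and the factor $(1+\tfrac s{2d})^{\bullet}$ only producing lower‑order corrections) evaluates it as a constant times $(\la+i\mu)^{(n-3)/2-\nu}$. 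Collecting terms and splitting off $e^{i\la d}$, the $\nu=0$ summand dominates and yields
\[
a_+(\la;x,y)\ \sim\ c\,u_0(x,y)\,d^{-(n-1)/2}\,(\la+i\mu)^{(n-3)/2}\,e^{-\mu d},
\]
the $\nu\ge1$ summands being smaller by $(d/(\la+i\mu))^{\nu}$ and all corrections of the same or better type; since each $E_\nu(t,d)$ is supported in $\{t\ge d>0\}$, only the singularity at $t=d$ meets $[0,\infty)$, so in fact $a_-\equiv0$ here (and $a_+\equiv0$ when $\mu<0$, by the first reduction).

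It remains to verify \eqref{2.15}--\eqref{2.16} for $a_+$, uniformly in $\la\ge1$. As $\mu$ enters $a_+$ only through $(\la+i\mu)^{(n-3)/2}e^{-\mu d}$, and after $\partial_{x,y}^\alpha$ only through extra powers of $\mu$ from $\partial_{x,y}e^{-\mu d}$, this reduces to the elementary bound $\sup_{\mu>0}\mu^{a}(\la+\mu)^{b}e^{-\mu d}\le C_{a,b}\,\max(\la,d^{-1})^{b}\,d^{-a}$, which equals $C_{a,b}\,\la^{b}d^{-a}$ precisely when $\la d\ge1$: with $a=|\alpha|$ and $b=(n-3)/2$ it gives $|\partial_{x,y}^\alpha a_+|\le C_\alpha\,\la^{(n-3)/2}d^{-(n-1)/2-|\alpha|}$ on $\{d\ge\la^{-1}\}$, which is \eqref{2.16}, and with $a=0$ it gives $|a_+|\le C\,d^{-(n-1)/2}d^{-(n-3)/2}=C\,d^{2-n}$ on $\{d\le\la^{-1}\}$, which is \eqref{2.15}. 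I expect this last step — verifying that the growth in $|\la+i\mu|$, together with the powers of $\mu$ produced by differentiation, is always compensated by the damping $e^{-\mu d}$ on the relevant range of $d$, so that the constants may be taken independent of both $\la$ and $\mu$ — to be the only point requiring real care; everything before it is the routine stationary‑phase bookkeeping described above.
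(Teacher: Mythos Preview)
Your approach is correct and reaches the same conclusion, but the route differs from the paper's. The paper writes the Hadamard parametrix in its oscillatory–integral form
\[
(\cos tP)(x,y)=(2\pi)^{-n}\int_{\Rn}e^{id_{S^n}(x,y)\1\cdot\xi}\,\alpha(t,x,y;|\xi|)\cos t|\xi|\,d\xi\ +\ \text{smooth},\qquad \alpha\in S^0,
\]
localizes to $|\xi|\approx\la$ with a cutoff $\beta(|\xi|/\la)$ (the complementary piece furnishing the $O(d^{2-n})$ remainder), and then inserts the standard asymptotics for the Fourier transform of surface measure on $S^{n-1}$; the $a_\pm$ arise from the two phases $e^{\pm i r d}$ in that expansion and the final bounds follow from an integration by parts in the radial variable. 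You instead use the real--space Riesz--kernel expansion $\sum_\nu u_\nu\,\partial_tE_\nu$, integrate by parts in $t$ to cancel the prefactor, and compute the $t$--integral as the Fourier--Laplace transform of the homogeneous distribution $s_+^{\nu-(n-1)/2}$ at $\la+i\mu$. Your computation is more explicit (and is the form used in \cite{DKS}, \cite{Shen}); it also yields the clean observation that only one phase survives, i.e.\ $a_-\equiv0$ for $\mu>0$, which the paper's both--signs bookkeeping does not make visible. The paper's approach, on the other hand, makes the comparison with the Euclidean kernel \eqref{5.5}--\eqref{5.8} transparent and avoids having to track the factor $(1+\tfrac{s}{2d})^{\nu-(n-1)/2}$.

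One point to tighten: your final ``elementary bound'' is applied to the leading asymptotic $a_+\sim d^{-(n-1)/2}(\la+i\mu)^{(n-3)/2}e^{-\mu d}$, but that asymptotic (and the claim that the corrections are ``lower order'') is only justified once $d\,|\la+i\mu|\gtrsim1$, whereas you invoke it on $\{d\le\la^{-1}\}$ to obtain \eqref{2.15}. The conclusion is still correct---on $\{d\le\la^{-1}\}$ the full integral $\int_0^\infty\rho(d+s)\bigl(s(2d+s)\bigr)^{\nu-(n-1)/2}e^{i(\la+i\mu)s}\,ds$ is $O(|\la+i\mu|^{\,n-2-2\nu})$ rather than $O\bigl(d^{\nu-(n-1)/2}(\la+i\mu)^{(n-3)/2-\nu}\bigr)$, and $|\la+i\mu|^{\,n-2}e^{-\mu d}\le C\max(\la,d^{-1})^{n-2}\le Cd^{\,2-n}$ on that set---but you should either prove this directly (e.g.\ by the rescaling $s\mapsto s/|\la+i\mu|$) or simply note, as the paper does, that the near--diagonal piece is trivially $O(\la^{\,n-2})\le O(d^{\,2-n})$.
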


Using this result, Proposition~\ref{2.2}, Lemma~\ref{lemma2.3} and \eqref{2.19}, we can prove the first part of Theorem~\ref{posresolve}.

Indeed, we first notice that by the first two of these results and the Hardy-Littlewood-Sobolev inequality we have that
\begin{equation}\label{2.35}
\|R_0^{\la,\mu}f\|_{L^s(S^n)}\le C_{r,s}\|f\|_{L^r(S^n)},\end{equation}
for $r,s$ as in \eqref{i.1}.  We also note that if we set
$$m_{\la,\mu}(\tau)=\frac{\text{sgn }\mu}{i(\la+i\mu)}\int_0^\infty  
(1-\rho(t))\, 
 e^{i(\sgn \mu)\la t}
e^{-|\mu|t} \, (\cos t \tau)\, dt,$$
then we clearly have for every $N=1,2,3,\dots$
$$\la \, |m_{\la,\mu}(\tau)|\le C_N(1+|\la-\tau|)^{-N}, \quad \text{if }\, \tau \ge 0,
\, \, \text{and } \, \, \la, \, |\mu|\ge1/2.$$
Therefore, by \eqref{i.14},
$$R^{\la,\mu}_1f=(\Delta+(\tfrac{n-1}2)^2)^{-1}f-R_0^{\la,\mu}f=\sum_{k=0}^\infty m_{\la,\mu}(\la_k)H_k,$$
must, by \eqref{2.19} satisfy for $\la, |\mu|\ge 1/2$,
\begin{align*}
\|R^{\la,\mu}_1f\|_{L^s(S^n)}&\le \sum_{k=0}^\infty |m_{\la,\mu}| \,  \|H_kf\|_{L^s(S^n)}
\\ &\le C\Bigl(\sum_{k=0}^\infty \la^{-1}(1+k)(1+|\la-k|)^{-3}\Bigr)\, \|f\|_{L^r(S^n)}
\\
&\le C'\|f\|_{L^r(S^n)}.
\end{align*}

Based on this estimate we know that we have the uniform bounds in \eqref{i.9} provided that
$$\zeta=(\la+i\mu)^2, \quad \la, \, |\mu|\ge 1/2.$$
Thus we have proven that we have the uniform bounds when $\zeta \in {\mathcal R}$
and $\text{Re }\zeta \ge 1$.  Since the remaining cases follow from Sobolev
estimates, the proof of \eqref{i.9} is complete.

To complete the proof of the theorem (modulo the proofs of the Propositions), we
just need to show that \eqref{i.10} follows from \eqref{i.9} and a simple scaling
argument.  To whit, we claim that if we have
for a certain $r,s$ as in \eqref{i.1} and $\zeta\in \C$
\begin{equation}\label{2.36}\|u\|_{L^s(S^n,dV_{S^n})}\le C_{r,s}
\bigl\|\bigl((\Delta_{S^n}-(\tfrac{n-1}2)^2)+\zeta\bigr)u
\bigr\|_{L^r(S^n, dV_{S^n})}, \quad u\in C^\infty(S^n),\end{equation}
then for the same constant $C_{r,s}$, we must have for a given $\kappa>0$
\begin{equation}\label{2.37}\|u\|_{L^s(S^n,dV_\kappa)}\le C_{r,s}
\bigl\|\bigl((\Delta_{\kappa}-\kappa (\tfrac{n-1}2)^2)+\kappa\zeta\bigr)u
\bigr\|_{L^r(S^n, dV_{\kappa})},  u\in C^\infty.\end{equation}

We recall that $\Delta_{S^n}=\Delta_1$ and that $dV_{S^n}=dV_1$.
To use this we note  that, for, say,
$u_1\in C^\infty_0((0,\pi)\times S^{n-1}$, if we set
$$u_\kappa(\cdot ,\theta)=u_1(\sqrt\kappa \, \cdot  \, ,\theta)\in C^\infty_0((0, \pi/\sqrt\kappa)\times S^{n-1}), $$ then for $0<r<\pi$, we have
\begin{align*}&(\partial^2_r+(n-1)\cot r\,  \partial_r+(\csc r)^2\Delta_{S^{n-1}})u_1(r,\theta)
\\
&=\kappa^{-1} (\partial_r^2u_\kappa)( r/\sqrt\kappa,\theta)+(\sqrt\kappa)^{-1} 
\cot( r)(\partial_ru_\kappa)( r/\sqrt\kappa,\theta)+(\csc r)^2\Delta_{S^{n-1}}u_\kappa( r/\sqrt\kappa,\theta)
\\
&=\kappa^{-1}\Bigl[(\partial_r^2u_\kappa)(t,\theta)+\sqrt\kappa
\cot(\sqrt\kappa t)(\partial_ru_\kappa)(t,\theta)+(\sqrt\kappa \csc \sqrt\kappa t)^2\Delta_{S^{n-1}}u_\kappa(t,\theta)\Bigr],  
\end{align*}
with $t=r/\sqrt\kappa$, $0<t< \pi/\sqrt\kappa$.
Thus, if $u\in C^\infty(S^n)$ and $u_1(r,\theta)$ and $u_\kappa(r,\theta)$ are
its polar coordinates representation about a point in $S^n$ with respect to
the metric of constant curvature $1$ and $\kappa>0$, respectively, we have
$$(\Delta_1+z)u_1(\, \cd\, ,\theta)=\kappa^{-1}(\Delta_\kappa+\kappa z)u_\kappa(\, \cd\, /\sqrt\kappa,\theta), \quad z\in \C.$$
Therefore, by \eqref{i.5},
\begin{align*}
\int_{S^n}&|(\Delta_1-(\tfrac{n-1}2)^2+\zeta)u|^r\, dV_1
\\
&=\int_{S^{n-1}}\int_0^\pi 
|(\Delta_1-(\tfrac{n-1}2)^2+\zeta)u_1(t,\theta)|^r \, (\sin t)^{n-1}dt d\theta 
\\
&=\sqrt{\kappa}\int_{S^{n-1}}\int_0^{\pi/\sqrt\kappa}
\bigl|\kappa^{-1}(\Delta_\kappa-\kappa(\tfrac{n-1}2)^2+\kappa\zeta)u_\kappa(t,\theta)\bigr|^r
\, \bigl(\sin(\sqrt\kappa t)\bigr)^{n-1}\, dt d\theta 
\\
&=(\sqrt{\kappa})^n\int_{S^{n-1}}\int_0^{\pi/\sqrt\kappa}
\bigl|\kappa^{-1}(\Delta_\kappa-\kappa(\tfrac{n-1}2)^2+\kappa\zeta)u_\kappa(t,\theta)\bigr|^r
\, \Bigl(\frac{\sin(\sqrt\kappa t)}{\sqrt\kappa}\Bigr)^{n-1}\, dt d\theta 
\\
&=\kappa^{\frac{n}2}\kappa^{-r}\int_{S^n}|(\Delta_\kappa-\kappa(\tfrac{n-1}2)^2 +\kappa\zeta)u|^r
\, dV_\kappa.
\end{align*}
Similarly,
$$\int_{S^n}|u|^s\, dV_1=\kappa^{\frac{n}2}\int_{S^n}|u|^s\, dV_\kappa.$$
Therefore, if we assume that \eqref{2.36} is valid, we have
\begin{align*}
\|u\|_{L^s(dV_\kappa)}&=\kappa^{-\frac{n}{2s}}\|u\|_{L^s(dV_1)}
\\
&\le C_{r,s}\kappa^{-\frac{n}{2s}}\|(\Delta_{1}+\zeta)u\|_{L^r(dV_1)}
\\
&=C_{r,s}\kappa^{-\frac{n}{2s}}\kappa^{\frac{n}{2r}}\kappa^{-1}
\|(\Delta_\kappa-\kappa(\tfrac{n-1}2)^2+\kappa \zeta)u\|_{L^r(dV_\kappa)},
\end{align*}
which yields \eqref{2.37} as claimed by the first part of our assumption in
\eqref{i.1}.

\newsection{Stein-Tomas estimates  for $\Hn$}

If $P=\sqrt{-\Delta_{\Hn}-(\tfrac{n-1}2)^2}$, then the main result in this section is the
following analogue of the Stein-Tomas restriction theorem for Euclidean space.

\begin{theorem}\label{HST}  There is a uniform constant $C$ so that for all $T$, $\la\ge1$ we have
\begin{equation}\label{s1}
T^{\frac12}\bigl\|\1_{[\la, \la+T^{-1}]}(P)\bigr\|_{L^2(\Hn)\to L^{\frac{2(n+1)}{n-1}}(\Hn)}
\le C\la^{\frac{n-1}{2(n+1)}},
\end{equation}
and
\begin{equation}\label{s2}
T\bigl\|\1_{[\la, \la+T^{-1}]}(P)\bigr\|_{L^{\frac{2(n+1)}{n+3}}(\Hn)\to L^{\frac{2(n+1)}{n-1}}(\Hn)}
\le C\la^{\frac{n-1}{n+1}}.
\end{equation}
\end{theorem}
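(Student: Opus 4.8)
The plan is to adapt Stein's complex-interpolation proof of the Stein–Tomas restriction theorem to the hyperbolic setting, using the wave equation on $\Hn$ in place of the Fourier transform. As in the classical argument, the key is to construct an analytic family of operators $T_z$ whose value at the relevant point on the line $\Re z = \tfrac{n-1}{n+1}$ (or $1$) reproduces, up to acceptable errors, the spectrally localized operator $\1_{[\la,\la+T^{-1}]}(P)$, and which satisfies an $L^1 \to L^\infty$ bound on one edge ($\Re z = $ some large number) and an $L^2 \to L^2$ bound on the other edge ($\Re z = 0$). Specifically, starting from \eqref{i.14} or rather from the spectral representation $\1_{[\la,\la+T^{-1}]}(P) = \int \Hat\chi\,(\cdots)\,(\cos tP)\,dt$ after a suitable smoothing, I would introduce $T_z$ by inserting a factor $(\text{something})^z$ built from the natural weight $\bigl(\frac{\sinh d}{d}\bigr)^{-(n-1)/2}$-type Jacobian on $\Hn$ together with a power of the spectral variable, so that at the endpoint $z$ one recovers $\chi(T(P-\la))$ for a bump $\chi$, while the full $\1_{[\la,\la+T^{-1}]}$ follows by summing $O(1)$ such pieces (or by a standard reduction from the smooth cutoff to the sharp one).

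**Next I would** establish the two edge estimates. On $\Re z = 0$, the $L^2 \to L^2$ bound comes directly from the spectral theorem: the multiplier is bounded (with the right power of $\la$ and $T$ tracked), exactly as in the Euclidean case, because $P$ is self-adjoint on $L^2(\Hn, dV_{\Hn})$. On the other edge, the $L^1 \to L^\infty$ bound requires a pointwise kernel estimate: one needs that the kernel of the $t$-averaged half-wave propagator localized near frequency $\la$ decays like $\la^{(n-1)/2}$ uniformly. Here the Hadamard parametrix / explicit formulas for $\cos tP$ on hyperbolic space — together with the fact that the shifted operator $P = \sqrt{-\Delta_{\Hn}-(\tfrac{n-1}2)^2}$ has finite propagation speed and a parametrix representation with symbol-type amplitudes on the scale $|t| \lesssim 1$ — give the needed $\la^{(n-1)/2}$ decay, with the volume growth of $\Hn$ controlled precisely because the exponential growth of $dV_{\Hn}$ is compensated by the corresponding decay in the wave kernel (this is the place where $\la \ge 1$ is genuinely used and where the Euclidean proof must be modified). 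Then \eqref{s1} follows by analytic interpolation (Stein's theorem), and \eqref{s2} follows from \eqref{s1} by composing $\1_{[\la,\la+T^{-1}]}(P) = \1_{[\la,\la+T^{-1}]}(P)\cdot \1_{[\la,\la+T^{-1}]}(P)$ ($TT^*$ argument) — i.e. factoring the $L^{\frac{2(n+1)}{n+3}} \to L^{\frac{2(n+1)}{n-1}}$ operator through $L^2$, picking up the square of the bound in \eqref{s1} and the correct power $\sqrt{\e}\cdot\sqrt{\e} = \e = T^{-1}$ of the spectral window.

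**The main obstacle** I anticipate is the uniform pointwise bound on the wave kernel for the $L^1 \to L^\infty$ endpoint of the analytic family: on hyperbolic space the volume element grows like $e^{(n-1)r}$, so a naive estimate of $\int \Hat\chi(t)(\cos tP)(x,y)\,dt$ loses exponential factors in $d_{\Hn}(x,y)$, and one must see that the shift by $(\tfrac{n-1}2)^2$ in $P^2$ exactly cancels this growth in the relevant regime — this is why one works with the shifted Laplacian and why the estimate fails (or at least is unclear) as $\la \searrow 0$. Handling the regime $d_{\Hn}(x,y) \gtrsim 1$ (outside the reach of a single local Hadamard parametrix step on the time interval $|t|\lesssim 1$) requires either iterating the parametrix or invoking known explicit spherical-function asymptotics on $\Hn$; in dimension $3$ the explicit formula for the wave kernel on $\Hn$ makes this transparent, which is presumably why the $n=3$, all-$\zeta$ improvement is available. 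Once the kernel bound is in hand, the interpolation and the $TT^*$ step are routine, and the passage from the smooth spectral cutoff to the sharp indicator $\1_{[\la,\la+T^{-1}]}$ is standard (sum $O(1)$ overlapping bumps, or dominate the indicator by a bump of comparable width).
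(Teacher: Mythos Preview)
Your high-level strategy---reduce to a smooth cutoff, represent it via the wave group, run Stein's analytic interpolation between an $L^2\to L^2$ edge and an $L^1\to L^\infty$ edge, then get \eqref{s2} from \eqref{s1} by $TT^*$---matches the paper. But two concrete choices in your plan differ from what actually works, and one of them is not merely cosmetic.

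First, the analytic family. The paper does not build it from spatial $(\sinh d/d)$-type weights or powers of the spectral variable; inserting spatial weights would spoil the clean spectral-theorem argument at the $L^2$ edge. Instead one simply inserts $t^z$ into the time integral,
\[
S^z_{\la,T}=\frac{e^{z^2}}{\Gamma(z+1)}\int_0^\infty t^z\,(1-\beta(t))\,\Hat\chi(t/T)\,e^{i\la t}\cos tP\,dt,
\]
with $\beta\in C^\infty_0$ equal to $1$ near $0$. The $L^2\to L^2$ bound is taken at $\Re z=-1$ (not $0$), where the Fourier transform of $e^{z^2}t_+^z/\Gamma(z+1)$ is uniformly bounded, and the $L^1\to L^\infty$ bound at $\Re z=\tfrac{n-1}2$; the target operator sits at $z=0$.

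Second, and this is the substantive gap: the interpolation is \emph{not} run on the full operator. The paper first peels off the local piece $\int\beta(t)\,\Hat\chi(t/T)e^{i\la t}\cos tP\,dt$ and handles it by the $T=1$ spectral-cluster bound (Lemma~\ref{localprop}), proved via the Hadamard parametrix on unit geodesic balls exactly as on a compact manifold. Only the remainder, with the cutoff $(1-\beta(t))$ forcing $t\gtrsim 1$, enters the analytic family. This cutoff is what makes the $L^1\to L^\infty$ endpoint tractable: one plugs in the exact closed-form formulas for $(\cos tP)(x,y)$ on $\Hn$---the $\bigl(\tfrac{1}{\sinh t}\tfrac{d}{dt}\bigr)^{(n-1)/2}$ formula in odd dimensions and its Abel-integral analogue in even dimensions---and since $(1-\beta)$ kills small $t$ while $t/\sinh t<1$ for $t>0$, the resulting kernel is $O(\la^{(n-1)/2})$ by inspection. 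So your anticipated obstacle (large $d_{\Hn}(x,y)$) is in fact the easy part once the explicit formulas are invoked; no parametrix iteration or spherical-function asymptotics are needed. The piece you treat as routine (short times, near the diagonal) is what must be split off first, because without the $(1-\beta)$ cutoff both the $t^z$ factor and the pointwise kernel bound would break down near $t=0$.
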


By duality, \eqref{s1} is equivalent to 
\begin{equation}\label{s3}
T^{\frac12}\bigl\|\1_{[\la, \la+T^{-1}]}(P)\bigr\|_{L^{\frac{2(n+1)}{n+3}}(\Hn)\to L^2(\Hn)}
\le C\la^{\frac{n-1}{2(n+1)}}, \quad \la, T\ge1,
\end{equation}
and it is clear that this estimate along with \eqref{s1} yields \eqref{s2}.  Conversely,
by a standard $TT^*$ argument, \eqref{s2} implies \eqref{s1}.

Before proving the Theorem, let us make a couple more observations about these estimates.  First, by the spectral theorem and \eqref{s3}, we have that there is a uniform constant $C$ so that
\begin{multline}\label{s4}
T^{\frac12}\bigl\|m(T(P-\la))\bigr\|_{L^{\frac{2(n+1)}{n+3}}(\Hn)\to L^2(\Hn)}
\le C\|m\|_{L^\infty}\la^{\frac{n-1}{2(n+1)}}, \quad \la, T\ge1,
\\
\text{if } \, \, m\in C(\R) \, \, \, \text{and } \, \,  \text{supp }m \subset [0,1].
\end{multline}
From this and \eqref{s1} we immediately obtain the following result which will be useful in the sequel:

\begin{corr}\label{corr}
There is a constant $C$ which is independent of $\la, \, T\ge1$ so that if $m$ is as
in \eqref{s4} we have
\begin{equation}\label{s5}
T\bigl\|m(T(P-\la))\bigr\|_{L^{\frac{2(n+1)}{n+3}}(\Hn)\to L^{\frac{2(n+1)}{n-1}}(\Hn)}
\le C\|m\|_{L^\infty}\la^{\frac{n-1}{n+1}}.
\end{equation}
\end{corr}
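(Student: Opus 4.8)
The plan is to deduce \eqref{s5} from the two estimates already in hand, \eqref{s1} and \eqref{s4}, by factoring the operator $m(T(P-\la))$ through $L^2(\Hn)$. The starting point is the elementary spectral observation that, since $\supp m\subset[0,1]$, the function $\tau\mapsto m(T(\tau-\la))$ is supported in $\tau\in[\la,\la+T^{-1}]$; hence, by the spectral theorem, $\1_{[\la,\la+T^{-1}]}(P)$ acts as the identity on the range of $m(T(P-\la))$, i.e.
\[
\1_{[\la,\la+T^{-1}]}(P)\circ m(T(P-\la))=m(T(P-\la))
\]
as bounded operators on $L^2(\Hn)$.

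Next I would use this factorization together with the triangle inequality for operator norms: with $p=\tfrac{2(n+1)}{n+3}$ and $q=\tfrac{2(n+1)}{n-1}$,
\[
\|m(T(P-\la))\|_{L^p(\Hn)\to L^q(\Hn)}\le \bigl\|\1_{[\la,\la+T^{-1}]}(P)\bigr\|_{L^2(\Hn)\to L^q(\Hn)}\,\bigl\|m(T(P-\la))\bigr\|_{L^p(\Hn)\to L^2(\Hn)}.
\]
By \eqref{s1} the first factor on the right is at most $C\,T^{-1/2}\la^{\frac{n-1}{2(n+1)}}$, and by \eqref{s4} the second is at most $C\,\|m\|_{L^\infty}T^{-1/2}\la^{\frac{n-1}{2(n+1)}}$. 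Multiplying these bounds and using $T^{-1/2}\cdot T^{-1/2}=T^{-1}$ and $\la^{\frac{n-1}{2(n+1)}}\cdot\la^{\frac{n-1}{2(n+1)}}=\la^{\frac{n-1}{n+1}}$ gives $\|m(T(P-\la))\|_{L^p\to L^q}\le C\|m\|_{L^\infty}T^{-1}\la^{\frac{n-1}{n+1}}$, which is exactly \eqref{s5} after multiplying through by $T$.

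There is no real obstacle here once Theorem~\ref{HST} (and the derived estimate \eqref{s4}) is granted: the argument is a one-line $L^2$-factorization in the spirit of the classical passage from a restriction estimate to a Bochner--Riesz-type estimate. The only points needing a word of care are the spectral-support identity above (immediate from $\supp m\subset[0,1]$ and the spectral theorem) and the bookkeeping of the powers of $T$ and $\la$, both of which are routine.
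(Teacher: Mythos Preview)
Your proposal is correct and is precisely the argument the paper has in mind when it says ``From this and \eqref{s1} we immediately obtain'' the corollary: you have simply made explicit the $L^2$-factorization $m(T(P-\la))=\1_{[\la,\la+T^{-1}]}(P)\circ m(T(P-\la))$ and multiplied the bounds from \eqref{s1} and \eqref{s4}. (A small terminological slip: what you call the ``triangle inequality for operator norms'' is really submultiplicativity under composition, but the estimate is of course correct.)
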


To prove the theorem, let us first prove the special case corresponding to $T=1$, which we
can obtain by local techniques:

\begin{lemma}\label{localprop}
There is a uniform constant $C$ so that 
\begin{equation}\label{s6}
\|\1_{[\la,\la+1]}(P)\|_{L^2(\Hn)\to L^{\frac{2(n+1)}{n-1}}(\Hn)}\le C(1+\la)^{\frac{n-1}{2(n+1)}}, 
\, \, 
\la \ge 0.
\end{equation}
\end{lemma}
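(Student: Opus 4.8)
The plan is to prove the spectral cluster estimate \eqref{s6} by local parametrix methods, exploiting the fact that on the scale of unit frequency bands, hyperbolic space is indistinguishable from Euclidean space (more precisely, from a compact perturbation thereof with bounded geometry). First I would choose an even $\rho \in C^\infty_0(\R)$ with $\rho(0)=1$ and $\Hat\rho$ supported in a small interval $(-\epsilon,\epsilon)$ where $\epsilon$ is less than, say, one tenth of the injectivity radius of $\Hn$ (which is infinite, so any finite $\epsilon$ works, but a small fixed choice is convenient). Since $\1_{[\la,\la+1]}(P) = \1_{[\la,\la+1]}(P)\circ \chi(P-\la)$ for any $\chi$ equal to $1$ on a neighborhood of $[0,1]$, and since $\|\1_{[\la,\la+1]}(P)\|_{L^2\to L^2}\le 1$, it suffices by composition to bound $\|\chi(P-\la)\|_{L^2(\Hn)\to L^{2(n+1)/(n-1)}(\Hn)}$. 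Standard Fourier-analytic reductions then let us replace $\chi(P-\la)$ by the operator $\rho * \chi$, i.e. by
\[
\frac{1}{2\pi}\int_{-\infty}^{\infty} \Hat\rho(t)\, e^{it\la}\, \chi(t)\, (\cos tP)\, dt
\]
plus a rapidly decaying remainder; the point is that $\Hat\rho(t)\,e^{it\la}$ localizes $t$ to a small interval, on which $\cos tP$ has a clean parametrix.

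Next I would invoke the Hadamard parametrix for $\cos tP$ on $\Hn$, valid for $|t|$ less than the injectivity radius: for small $t$ the kernel of the above operator is a finite-order Fourier integral operator whose kernel, away from the diagonal, takes the form $\sum_\pm a_\pm(\la; d_{\Hn}(x,y))\, e^{\pm i\la d_{\Hn}(x,y)}$ with symbol bounds of exactly the type \eqref{2.15}--\eqref{2.16}, together with a diagonal piece that is $O(d_{\Hn}(x,y)^{2-n})$; this is the hyperbolic analogue of Proposition~\ref{prop2.4} and follows from the same stationary-phase computation. Because $\Hn$ has bounded geometry and because the kernel is supported where $d_{\Hn}(x,y)\le \epsilon$, I can localize to coordinate balls $B_2(0)$ in a normal coordinate chart, in which the metric is uniformly close to Euclidean; a standard partition-of-unity and almost-orthogonality argument reduces the global $L^2 \to L^q$ bound to the local one on such balls, uniformly over the choice of chart (here one uses that hyperbolic space is homogeneous, so all the charts are isometric). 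On each such ball the operator is precisely of the type covered by the oscillatory-integral estimates: applying Proposition~\ref{prop2.2} (or rather the $L^2 \to L^{2(n+1)/(n-1)}$ endpoint case of Stein's oscillatory integral theorem as in Lemma~\ref{lemma2.3}, combined with Hardy--Littlewood--Sobolev for the $O(d^{2-n})$ diagonal term) gives the bound $C(1+\la)^{(n-1)/(2(n+1))}$, since $2(n+1)/(n-1) = \tfrac{n+1}{n-1}p'$ with $p=2$ and the gain is $\la^{-n/q}\cdot \la^{(n-1)/2} = \la^{(n-1)/(2(n+1))}$ after accounting for the $\la^{(n-1)/2}$ amplitude of the parametrix.

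I expect the main obstacle to be the passage from the local estimate on coordinate balls to the global estimate on $\Hn$: one must check that the localized pieces can be summed with constants uniform in the (infinitely many) balls and that the cross terms are negligible. The resolution is that the operator's kernel is supported within distance $\epsilon$ of the diagonal, so for a suitably chosen cover of $\Hn$ by unit balls with bounded overlap, only boundedly many pairs of balls interact; combined with the fact that every unit ball in $\Hn$ is isometric to every other (homogeneity of hyperbolic space), the local constant from Proposition~\ref{prop2.2} is genuinely uniform, and a Cotlar--Stein or simple triangle-inequality-plus-finite-overlap argument closes the gap. A secondary technical point is handling the small-$\la$ range $0 \le \la \le 1$, but there the estimate is a trivial consequence of local elliptic (Sobolev) bounds for $\1_{[\la,\la+1]}(P)$, since this spectral projection has uniformly bounded kernel and range in a fixed Sobolev space. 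Finally, the remainder term coming from the difference between $\chi(P-\la)$ and its smoothed version $\rho*\chi$ contributes an operator whose kernel is $O_N(\la^{-N})$ in every $C^k$ norm on compact sets and decays rapidly off the diagonal, hence is harmless.
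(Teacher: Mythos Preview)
Your approach is essentially the same as the paper's: reduce to a smooth spectral multiplier $\rho(\la-P)$ with $\Hat\rho$ compactly supported, use finite propagation speed to localize to unit geodesic balls, and then apply the Hadamard parametrix together with oscillatory integral bounds (the paper simply cites the proof of (5.1.3') in \cite{FIO}, whereas you spell out the appeal to Proposition~\ref{prop2.2}/Lemma~\ref{lemma2.3} and the bounded-geometry summation). One small slip: you cannot take $\rho\in C^\infty_0(\R)$ \emph{and} have $\Hat\rho$ compactly supported---choose $\rho\in\mathcal S(\R)$ with $\Hat\rho\in C^\infty_0$, as the paper does.
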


If $\la$ is bounded by a fixed constant, the estimate \eqref{s6} is a simple consequence of the Sobolev estimates for $\Hn$ that can be
found, for instance in \S 3 of \cite{Anker}, 
which say that if $1<p<q<\infty$ then for powers of the unshifted Laplacian,
$-\Delta_{\Hn}=P^2+(\tfrac{n-1}2)^2$, we have
\begin{equation}\label{Hsob}
\Bigl\|\bigl(P^2+(\tfrac{n-1}2)^2\bigr)^{-n(\tfrac1p-\tfrac1q)}\Bigr\|_{L^p(\Hn)\to L^q(\Hn)}<\infty.
\end{equation}
Therefore, in proving \eqref{s6} we shall assume that
$\la$ is large.  By duality and the spectral theorem, this then would follow from showing
that
$$\|\rho(\la-P)\|_{L^2(\Hn)\to L^{\frac{2(n+1)}{n-1}}(\Hn)}\le C\la^{\frac{n-1}{2(n+1)}}, \quad \la \gg 1,$$
assuming that 
\begin{equation}\label{s7}
\rho\in {\mathcal S}(\R), \, \, \, \rho(0)=1, \, \, \text{and } \, \,
\text{supp } \Hat\rho\subset [1/2,1].
\end{equation}
We then write
$$\rho(\la-P)=\frac1\pi \int \Hat \rho(t)e^{i\la t}\cos tP\, dt +\rho(\la+P).$$
Since by duality, the spectral theorem and the  Sobolev estimates \eqref{Hsob}, we have
$$\|\rho(\la+P)\|_{L^2(\Hn)\to L^{\frac{2(n+1)}{n-1}}(\Hn)}=O(\la^{-N}), \, \forall N, \,
\la \ge1,$$
it suffices to show that
$$\Bigl\|\int \Hat \rho(t)e^{i\la t}\cos tP\, dt\Bigr\|_{L^2(\Hn)\to L^{\frac{2(n+1)}{n-1}}(\Hn)}
=O(\la^{\frac{n-1}{2(n+1)}}), \, \, \la \ge 1.$$
Since the kernel of this operator vanishes when the distance between $x$ and $y$ is lager than one,
to prove this estimate it suffices to verify that
$$\Bigl\|\int \Hat \rho(t)e^{i\la t}\cos tP f\, dt\Bigr\|_{L^{\frac{2(n+1)}{n-1}}(B)}
\le C\la^{\frac{n-1}{2(n+1)}}\|f\|_{L^2(\Hn)}, \, \, \la \ge 1,$$
whenever $B$ is a geodesic ball in $\Hn$ of radius one.  If we choose geodesic normal
coordinates about the center, this follows from the proof of (5.1.3') in \cite{FIO},
which completes the proof of \eqref{s6}.

\begin{proof}[Proof of Theorem~\ref{HST}]
Since, as we noted before \eqref{s1} and \eqref{s2} are equivalent, it suffices to prove the former.
Repeating the first part of the proof of Lemma~\ref{localprop}, it suffices to show that
if $\rho$ is as in \eqref{s7} then 
$$T^{\frac12}\|\rho(T(\la-P))\|_{L^2(\Hn)\to L^{\frac{2(n+1)}{n-1}}(\Hn)}\le C\la^{\frac{n-1}{2(n+1)}}, \quad T, \la \ge 1.$$
If $\chi =|\rho|^2\in {\mathcal S}(\R)$, this follows from showing that
$$T\|\chi(\la-P)\|_{L^{\frac{2(n+1)}{n+3}}(\Hn)\to L^{\frac{2(n+1)}{n-1}}(\Hn)}\le 
C\la^{\frac{n-1}{n+1}}, \, \, \la, T\ge 1,$$
and since, by Sobolev estimates $\chi(T(\la+P))$ has $L^{\frac{2(n+1)}{n+3}}(\Hn)\to L^{\frac{2(n+1)}{n-1}}(\Hn)$ operator norm which is $O((\la T)^{-N})$ for every $N$, we would be done if we could
show that for $\la, T\ge1$ we have the uniform bounds
$$
\Bigl\| \int \Hat \chi(t/T) e^{i\la t}\cos tP\, dt\Bigr\|_{L^{\frac{2(n+1)}{n+3}}(\Hn)\to L^{\frac{2(n+1)}{n-1}}(\Hn)}\le C\la^{\frac{n-1}{n+1}}, \, \, \la , T\ge 1$$

If $\beta\in C^\infty_0(\R)$ satisfies $\beta(s)=1$, $|s|\le 1$, then it follows from
the Lemma~\ref{localprop}, duality and orthogonality that
$$
\Bigl\| \int \beta(t) \Hat \chi(t/T) e^{i\la t}\cos tP\, dt\Bigr\|_{L^{\frac{2(n+1)}{n+3}}(\Hn)\to L^{\frac{2(n+1)}{n-1}}(\Hn)}\le C\la^{\frac{n-1}{n+1}}, \, \, \la, T\ge 1.$$
Therefore, we would be done if we could show that
$$
\Bigl\| \int_0^\infty (1- \beta(t)) \Hat \chi(t/T) e^{i\la t}\cos tP\, dt\Bigr\|_{L^{\frac{2(n+1)}{n+3}}(\Hn)\to L^{\frac{2(n+1)}{n-1}}(\Hn)}\le C\la^{\frac{n-1}{n+1}}, \, \, \la, T\ge 1,$$
since the same argument will give this bound if the integral is taken over $(-\infty,0]$.

To prove this, as in Stein's argument for the Euclidean case, we shall use analytic interpolation.  Define the analytic family of operators
\begin{equation}\label{s8}
S^z_{\la,T}=\frac{e^{z^2}}{\Gamma(z+1)}\int_0^\infty t^z \, 
(1- \beta(t)) \Hat \chi(t/T) e^{i\la t}\cos tP\, dt.
\end{equation}
Then $S^0_{\la,T}$ is the operator in \eqref{s8}, and so, by Stein's analytic interpolation
theorem, we would have this estimate if we could show that
\begin{equation}\label{s9}
\|S^z_{\la,T}\|_{L^2(\Hn)\to L^2(\Hn)}\le C, \quad \text{Re }z=-1,\, \, \la, T\ge 1,
\end{equation}
as well as
\begin{equation}\label{s.10}
\|S^z_{\la,T}\|_{L^1(\Hn)\to L^\infty(\Hn)}\le C\la^{\frac{n-1}2}, \, \,
\text{Re }z=\tfrac{n-1}2, \, \la, T\ge 1.
\end{equation}

The estimate \eqref{s9} follows from the spectral theorem and the fact that the Fourier transforms
of $e^{z^2}t_+^z/\Gamma(z+1)$ 
are continuous functions whose $L^\infty$ norms which are bounded independent of $z$
if $\text{Re }z=-1$.

To prove \eqref{s.10}, let us first assume that $n$ is odd.  Then the kernel of
$S^{\frac{n-1}2 +i\sigma}_{\la, T}$, $\sigma \in \R$, is a constant $c_n$ times
\begin{equation}\label{s11}
\frac{e^{((n-1)/2+i\sigma)^2}}{\Gamma(\tfrac{n-1}2+i\sigma)}
\Bigl(\frac1{\sinh t}\frac{d}{dt}\Bigr)^{\frac{n-1}2}
\Bigl[(1-\beta(t))|t|^{\frac{n-1}2+i\sigma}\Hat \chi(t/T)e^{i\la t}\Bigr]\Biggl|_{t=d(x,y)},
\end{equation}
where $d(x,y)$ denotes the distance between $x$ and $y$ coming from the hyperbolic metric.
Since $(1-\beta(t))$ vanishes near the origin, and $t/\sinh t< 1$, $t>0$, it is easy to see
that this expression is bounded by a fixed multiple of $\la^{\frac{n-1}2}$ when $\la \ge 1$
and $\sigma \in \R$, which means that we have \eqref{s.10} when $n$ is odd.

To finish the proof, we have to establish \eqref{s.10}, when $n$ is even.  In this case
we can use the fact that the kernel is given by the formula
\begin{multline}\label{s.12}
c_n\frac{e^{((n-1)/2+i\sigma)^2}}{\Gamma(\tfrac{n-1}2+i\sigma)}
\int_t^\infty 
\frac{\sinh s}{\sqrt{\cosh s-\cosh t}} 
\\ \times \Bigl(\frac{1}{\sinh s}\frac{d}{ds}\Bigr)^{\frac{n}2}
\Bigl[(1-\beta(s))s^{\tfrac{n-1}2+i\sigma} \Hat \chi(s/T) e^{i\la s}\Bigr] \, ds,
\end{multline}
where, as before, $t=d(x,y)$.

To prove this we note that since $\beta$ equals one near the origin and
$\la, \, T\ge1$, if we use Leibniz's rule,
we can write the integral as
\begin{equation}\label{s.13}
\la^{\frac{n}2} \int_t^ \infty \frac{a(T,\sigma,\la; s)}{\sqrt{1+e^{-2s}-e^{t-s}-e^{-t-s}}} \,
e^{i\la s} \, ds,
\end{equation}
where for constants which are independent of $\sigma\in \R$ and $T, \la \ge 1$ we have
\begin{equation}\label{s.14}
|a(T,\sigma,\la;s)|\, + \, |\partial_s a(T,\sigma,\la;s)|\le C(1+s)^{-2},
\, \, \text{if } \, 1-\beta(s)\ne 0, \, \, s>0.
\end{equation}
Note also that we have that for $1-\beta(s)\ne 0$ we have
\begin{equation}\label{s.15}
1+e^{-2s}-e^{t-s}-e^{-t-s} \ge c(s-t), \quad \text{if } \, s\in [t,t+1], \, t\ge 0,
\end{equation}
for some fixed $c>0$.
Using this estimate and the bound for the first term in the left of \eqref{s.14}, we
deduce that
$$
\Bigl|\la^{\frac{n}2}\int_{t}^{t+\la^{-1}} \frac{a(T,\sigma;s)(1-\beta(s))}{
\sqrt{1+e^{-2s}-e^{t-s}-e^{-t-s}}} e^{i\la s}\, ds \Bigr|\le C\la^{\frac{n-1}2},
$$
as posited in \eqref{s.10},
independent of $\sigma, T$ and $\la$ as above.  To handle the remaining piece,
we note that we also have that there also must be a fixed $c>0$ so that if $1-\beta(s)\ne 0$ and $t>0$
then
$$
1+e^{-2s}-e^{t-s}-e^{-t-s}\ge c, \quad \text{if } \, \, s\ge t+1.
$$
Using this bound as well as \eqref{s.14}-\eqref{s.15}, we conclude that the remaining
piece of \eqref{s.13} must be bounded independent of $\sigma, T,\la$ as above since
after integrating by parts it is dominated by
\begin{multline*}
\la^{\frac{n}2-1}\Bigl|\frac{a(T,\sigma;s)(1-\beta(s))}{
\sqrt{1+e^{-2s}-e^{t-s}-e^{-t-s}}}\Bigr|_{s=t+\la^{-1}}
\\
+\la^{\frac{n}2-1}\int_{t+\la^ {-1}}^\infty \Bigl| \frac\partial{\partial s}
\Bigl(\frac{a(T,\sigma;s)(1-\beta(s))}{
\sqrt{1+e^{-2s}-e^{t-s}-e^{-t-s}}}\Bigr)\Bigr| \, ds
=O(\la^{\frac{n-1}2}).
\end{multline*}

\end{proof}

\newsection{Uniform $(L^r,L^s)$ resolvent bounds for $\Hn$}

The goal of this section is to prove the uniform resolvent bounds
\begin{equation}\label{r.1}
\bigl\| (\Delta_{\Hn}+(\tfrac{n-1}2)^2+z^2)^{-1}\bigr\|_{L^r(\Hn)\to L^s(\Hn)}
\le C, \quad z\in \C\backslash\R_+,\, \, |z|\ge 1,
\end{equation}
assuming that, as in \cite{KRS},
\begin{equation}\label{r.2}
n\bigl(\frac1r-\frac1s\bigr)=2, \, \, \text{and } \, \,
\min \bigl(\bigl|\frac1r-\frac1s\bigr|, \, \bigl|\frac1s-\frac12\bigr|\bigr)>
\frac1{2n}.
\end{equation}
This is the first part of Theorem~\ref{negresolve}.  By the same argument that \eqref{i.9} implies \eqref{i.10}, one sees that \eqref{r.1}
implies the other part of the theorem, \eqref{i.12}.

Clearly, by letting $-\kappa\nearrow 0$, one sees that this inequality
implies the earlier Euclidean estimates \eqref{i.2} in \cite{KRS},
and based on this, one sees that \eqref{r.2} is the sharp range
of exponents for all of these estimates.
  Pictorially, we have \eqref{r.2}
if $(\frac1r,\frac1s)$ is on the open line segment connecting $\alpha$ and $\alpha'$ in
Figure~\ref{fig2}.

Note also, that if we write $z=\la+i\mu$, then it suffices to verify that we have
\eqref{r.1} if
\begin{equation}\label{r.3}
z=\la+i\mu, \quad \text{with } \, \la \ge 1, \, \, \mu\ne 0,
\end{equation}
since the remaining cases of \eqref{r.1} follow from Sobolev estimates.

To prove \eqref{r.1}, as in \cite{BSSY}, we shall use the formula
\begin{equation}\label{r.4}
\bigl(\Delta_{\Hn}+(\tfrac{n-1}2)^2+(\la+i\mu)^2\bigr)^{-1}
=\frac{\sgn \mu}{i(\la+i\mu)}\int_0^\infty e^{i(\sgn \mu)\la t}
e^{-|\mu|t} \, (\cos tP)\, dt,
\end{equation}
where now $P$ is the square root of the shifted 
Laplacian on $\Hn$, i.e., $P=\sqrt{-\Delta_{\Hn}-(\tfrac{n-1}2)^2}$.  Thus
$$u(t,\cdot)=\cos tP$$
solves the Cauchy problem for the shifted Laplacaian
$$(\partial_t^2-\Delta_{\Hn}-(\tfrac{n-1}2)^2)u(t,x)=0, 
\quad u(0,\cdot)=f, \, \, \, \partial_tu(0,\cdot)=0.$$

To prove this,  choose
a Littlewood-Paley bump function $\beta\in C^\infty_0((1/2,2))$ satisfying
$$\sum_{j\in \Z} \beta(2^{-j}t)=1, t>0.$$
It then follows that
$$\beta_0(t)=1-\sum_{j=0}^\infty \beta(2^{-j}t)$$
equals one for $t>0$ near the origin and vanishes when $t\ge2$.  
Thus, the kernel of 
\begin{equation}\label{4.5}S_0=\frac{\sgn \mu}{i(\la+i\mu)}\int_0^\infty \beta_0(t) \, e^{i(\sgn \mu)\la t}
e^{-|\mu|t} \, (\cos tP)\, dt,
\end{equation}
vanishes when $d_{\Hn}(x,y)\ge 2$.  As we shall see, its kernel
is similar to the corresponding local operator $R^{\la, \mu}_0$ that
we encountered in our bounds for $S^n$.  Specifically, in the appendix
we shall prove the following

\begin{proposition}\label{prop4.1}
Let $S^{\la,\mu}_0(x,y)=S_0(x,y)$ denote the kernel of the operator
in \eqref{4.5}.  Then
\begin{equation}\label{4.6}
S_0(x,y)=\sum_\pm a_\pm(\la; x,y)e^{\pm i\la d_{\Hn}(x,y)} +O((d_{\Hn}(x,y))^{2-n}),
\end{equation}
where
\begin{equation}\label{4.7}
a_\pm(\la; x,y)=0 \quad \text{if } \, d_{\Hn}(x,y)\ge 2,
\end{equation}
and, moreover, with constants independent of $\la\ge 1$
\begin{equation}\label{4.8}
|a_\pm(\la; x,y)|\le C\bigl(d_{\Hn}(x,y)\bigr)^{2-n}, \quad\text{if } \,
d_{\Hn}(x,y)\ge \la^{-1},
\end{equation}
and
\begin{equation}\label{4.9}
|\nabla_x^{\alpha_1}\nabla^{\alpha_2}_ya_\pm(\la;x,y)|\le C_{\alpha_1,\alpha_2}\la^{\frac{n-3}2}\bigl(d_{\Hn}(x,y)\bigr)^{-\frac{n-1}2-\alpha_1-\alpha_2}, \quad\text{if } \, 
d_{\Hn}(x,y)\ge \la^{-1}.
\end{equation}
\end{proposition}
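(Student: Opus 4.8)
\emph{Proof idea.}
The plan is to follow the stationary phase computation behind Proposition~\ref{prop2.4} almost verbatim (cf.\ \cite{DKS}, \cite{BSSY}, \cite{Sef}); the one point that needs comment is that the wave parametrix used there is available on all of $\Hn$ for $|t|\le 2$. Since $P^2=-\Delta_{\Hn}-(\tfrac{n-1}2)^2\ge 0$ differs from $-\Delta_{\Hn}$ by a constant, $\cos tP$ has the same principal symbol and the same unit-speed light cones as $\cos t\sqrt{-\Delta_{\Hn}}$; in particular, by finite propagation speed together with the fact that $\beta_0(t)=0$ for $t\ge 2$, the kernel of the operator in \eqref{4.5} vanishes when $d_{\Hn}(x,y)\ge 2$, which in particular yields \eqref{4.7}, and we may assume $d_{\Hn}(x,y)<2$ from now on. Moreover, because $\Hn$ has no conjugate points, the Hadamard parametrix for the wave equation is valid for $|t|\le 2$ modulo a $C^\infty$ error, and the shift by $(\tfrac{n-1}2)^2$ only modifies the lower order Hadamard coefficients; these, together with the smooth error, will contribute only to the $O\big(d_{\Hn}(x,y)^{2-n}\big)$ term in \eqref{4.6}.

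First I would split the $t$-integral in \eqref{4.5} with a cutoff $\gamma\in\Coi$, $\gamma\equiv 1$ near $0$, into a piece supported in $t$ bounded away from $0$ and a piece supported near $t=0$; the last paragraph handles the latter. For the former, I would insert the Fourier integral representation, valid for $|t|\le 2$, of $\cos tP=\tfrac12(e^{itP}+e^{-itP})$: in geodesic normal coordinates about $y$ the kernel of $e^{\mp itP}$ is $(2\pi)^{-n}\int e^{i(\langle\exp_y^{-1}(x),\xi\rangle\mp t|\xi|_{g_y})}\,b_\mp(t,x,y,\xi)\,d\xi$ modulo a smooth kernel, with $b_\mp$ a symbol of order $0$ (see \cite{FIO}, \cite{Sthesis}). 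Passing to polar coordinates $\xi=\rho\omega$, $\omega$ on the metric unit cosphere at $y$, the phase becomes $\rho\big(\langle\exp_y^{-1}(x),\omega\rangle\mp t\big)$, and after integrating in $t$ against $\gamma$-truncated $\beta_0(t)e^{i(\sgn\mu)\la t}e^{-|\mu|t}b_\mp(\cdots)$, exactly one of the two signs gives a non-stationary $t$-phase $t\big((\sgn\mu)\la\mp\rho\big)$ with derivative of size $\gtrsim\la+\rho\ge\la\ge 1$; repeated integration by parts in $t$ renders this term, along with the parametrix error, negligible, i.e.\ $O\big(d_{\Hn}(x,y)^{2-n}\big)$. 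For the resonant sign the $t$-integration, now against a genuine $\Coi$ amplitude, produces a function decaying rapidly in $\la-\rho$, so the $\rho$-integral localizes to $|\rho-\la|\lesssim 1$. The remaining cosphere integral $\int e^{i\rho\langle\exp_y^{-1}(x),\omega\rangle}(\cdots)\,d\omega$ has, once $\rho\,d_{\Hn}(x,y)\gtrsim 1$, the two nondegenerate critical points $\omega=\pm\exp_y^{-1}(x)/d_{\Hn}(x,y)$, and stationary phase gives $\rho^{-\frac{n-1}2}\sum_\pm e^{\pm i\rho d_{\Hn}(x,y)}$ times a classical symbol in $\rho\,d_{\Hn}(x,y)$; integrating $\rho^{\,n-1}\,d\rho$ over $\rho\approx\la$ and absorbing the prefactor $(\la+i\mu)^{-1}$ then yields the asserted form $\sum_\pm a_\pm(\la;x,y)e^{\pm i\la d_{\Hn}(x,y)}$, with the bounds \eqref{4.8}--\eqref{4.9} falling out because each $x$- or $y$-derivative is absorbed by the symbol in $\rho\,d_{\Hn}(x,y)$ at the cost of one factor $d_{\Hn}(x,y)^{-1}$, and with the powers of $\la$ matching $d_{\Hn}(x,y)^{2-n}\sim\la^{\frac{n-3}2}d_{\Hn}(x,y)^{-\frac{n-1}2}$ at the transition $d_{\Hn}(x,y)\sim\la^{-1}$ just as in the Euclidean case. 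When $d_{\Hn}(x,y)\le\la^{-1}$ no stationary phase is needed: the frequency integral has small phase variation and a direct estimate gives the $O\big(d_{\Hn}(x,y)^{2-n}\big)$ bound.

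It remains to treat the part of \eqref{4.5} with $t$ near $0$, where $\beta_0(t)\equiv 1$, so that it equals $\tfrac{\sgn\mu}{i(\la+i\mu)}\int \gamma(t)e^{i(\sgn\mu)\la t}e^{-|\mu|t}(\cos tP)(x,y)\,dt$; inserting the local (near-diagonal) Hadamard parametrix and comparing with formula \eqref{r.4} on $\Rn$ shows that this reproduces, to leading order, the localized parametrix for the Euclidean free resolvent $(\Delta_{\Rn}+z^2)^{-1}$ near the origin, hence supplies the $O\big(d_{\Hn}(x,y)^{2-n}\big)$ term and a further oscillatory piece of the form $a_\pm e^{\pm i\la d_{\Hn}(x,y)}$ for $d_{\Hn}(x,y)\gtrsim\la^{-1}$ obeying \eqref{4.8}--\eqref{4.9}, the lower order Hadamard terms and the smooth error again being $O\big(d_{\Hn}(x,y)^{2-n}\big)$ on $d_{\Hn}(x,y)<2$. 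I expect the main work to be bookkeeping rather than new ideas: propagating the symbol estimates \eqref{4.8}--\eqref{4.9} uniformly in $\la\ge 1$ through the stationary phase in $\omega$, in particular across $d_{\Hn}(x,y)\sim\la^{-1}$, and checking that the damping factor $e^{-|\mu|t}$ — harmless on $t\in[0,2]$ but with $t$-derivatives growing in $|\mu|$ — does not spoil uniformity; this last point is dealt with by treating $|\mu|\lesssim 1$ and $|\mu|\gg 1$ separately, the latter case being easier thanks to the extra exponential decay.
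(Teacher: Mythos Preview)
Your proposal is correct and is essentially the paper's own argument: the authors simply observe that, since $\Hn$ has bounded geometry and no conjugate points, the Hadamard-parametrix/stationary-phase computation behind Proposition~\ref{prop2.4} carries over verbatim, with \eqref{4.7} coming from finite propagation speed. Two minor remarks: your splitting of the $t$-integral into ``near $0$'' and ``away from $0$'' pieces and your separate handling of $|\mu|\gg 1$ are unnecessary --- the paper runs the Hadamard parametrix for $\cos tP$ uniformly over $0\le t\le 2$ and never integrates by parts in $t$ (only in the radial/frequency variables), so the $\mu$-dependence of $\partial_t e^{-|\mu|t}$ never enters; and the paper also notes an alternative, more direct route you do not mention, namely reading off \eqref{4.6}--\eqref{4.9} from the explicit formula for $(\cos tP)(x,y)$ on $\Hn$ in terms of $\bigl(\tfrac{1}{\sinh t}\tfrac{d}{dt}\bigr)^{\lfloor (n-1)/2\rfloor}$ acting on the time cutoff.
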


Due to this Proposition, it is clear that we can use the proof
of \eqref{2.35} to show that if $r,s$ are as in \eqref{i.1} then there is
a constant $C_{r,s}$ so that for all $\la\ge 1$ then
\begin{equation}\label{4.10}
\|S_0f\|_{L^s(\Hn)}\le C_{r,s}\|f\|_{L^r(\Hn)}.
\end{equation}
One first argues that whenever $f$ is supported in a unit ball
this bound holds with constants uniform of the center, and,
by \eqref{4.7}, this implies
\eqref{4.10} since $\Hn$ has bounded geometry.


Based on \eqref{4.10}, we would have \eqref{r.1} if we could show that there
is a uniform constant $C$ so that
if 
$$S_k=\frac{\sgn \mu}{i(\la+i\mu)}\int_0^\infty \beta(2^{-k}t) \, e^{i(\sgn \mu)\la t}
e^{-|\mu|t} \, (\cos tP)\, dt,$$
then if $\la\ge 1$
\begin{equation}\label{r.6}
\bigl\| S_k\bigr\|_{L^r(\Hn)\to L^s(\Hn)}
\le C2^{-k}, \, \, k=1,2,\dots, 
\, \, \, n(\tfrac1r-\tfrac1s)=2, \, \, \tfrac{2n}{n+3}\le r\le \tfrac{2n}{n+1}.
\end{equation}
Since this implies that the non-local part of the resolvent is actually bounded
for exponents $(\tfrac1r,\tfrac1s)$ on the {\em closed} segment joining
$\alpha$ and $\alpha'$ in the Figure~\ref{fig2}, i.e., we have
\begin{multline}\label{r.7}
\bigl\| \, (\Delta_{\Hn}+(\tfrac{n-1}2)^2+(\la+i\mu)^2)^{-1} -S_0\bigr\|_{L^r(\Hn)\to L^s(\Hn)}
\le C, \quad \la, \mu \in \R, \, \la \ge 1, 
\\
\text{and } \, n(\tfrac1r-\tfrac1s)=2, \, \, \tfrac{2n}{n+3}\le r\le \tfrac{2n}{n+1}.
\end{multline}

To prove \eqref{r.6}, we shall use an interpolation argument.  The three ingredients
we require are that there is a uniform constant $C$ so that for $k=1,2,\dots$
\begin{equation}\label{r.8}
\|S_k\|_{L^2(\Hn)\to L^{\frac{2(n+1)}{n-1}}(\Hn)}\le C2^{\frac{k}2}
\la^{-\frac{n+3}{2(n+1)}},
\end{equation}
 as well as for all $k,N\in {\mathbb N}$, there is a constant $C_N$, depending only on $N$ so that
 \begin{equation}\label{r.9}
 \|S_k\|_{L^1(\Hn)\to L^\infty(\Hn)}\le C_N 2^{-kN}\la^{\frac{n-3}2},
 \end{equation}
 and
 \begin{equation}\label{r.10}
 \|S_k\|_{L^{\frac{2n}{n+1}}(\Hn)\to L^\infty(\Hn)}\le C_N 2^{-kN}
 \la^{\frac{n-3}2}.
 \end{equation}

Since $\la^{-\frac{n+3}{2(n+1)}}=\la^{-1+\frac{n-1}{2(n+1)}}$, \eqref{r.8}
follows from the formula for $S_k$ and \eqref{s4}.

In odd dimensions, both \eqref{r.9} and \eqref{r.10} follow immediately from the fact that the kernel of $S_k$ is given by
$$c_n\frac{\sgn \mu}{i(\la+i\mu)}\Bigl(\frac1{\sinh t}\frac{d}{dt}\Bigr)^{\frac{n-1}2}
\Bigl[ \beta(2^{-k}t) \, e^{i(\sgn \mu)\la t}e^{-|\mu|t}\Bigr],
\quad t=d_{\Hn}(x,y).$$
In even dimensions, one establishes these two bounds using the fact that the kernel
is given by 
\begin{multline*}c_n\frac{\sgn \mu}{i(\la+i\mu)}\int_t^\infty 
\frac{\sinh s}{\sqrt{\cosh s-\cosh t}} 
\\ \times \Bigl(\frac{1}{\sinh s}\frac{d}{ds}\Bigr)^{\frac{n}2}
\Bigl[\beta(2^{-k}s) \, e^{i(\sgn \mu)\la s}e^{-|\mu|s}\Bigr] \, ds,
\quad t=d_{\Hn}(x,y),
\end{multline*}
by the same argument that established \eqref{s.10} from \eqref{s.12}.

Since
$$\frac{(n-1)^2}{2n(n+1)}=\frac{n-1}{2(n+1)}\cdot  \frac{n-1}{n},$$
and
$$-\frac2{n+1}=\frac{n-3}2 \cdot \frac1n-
\frac{n+3}{2(n+1)}\cdot \frac{n-1}n,$$
if we interpolate between \eqref{r.8} and \eqref{r.9},
we conclude that
\begin{equation}\label{r.11}
\|S_k\|_{L^{\frac{2n}{n+1}}(\Hn)\to L^{\frac{2n(n+1)}{(n-1)^2}}(\Hn)}
=O_N(2^{-kN}\la^{-\frac2{n+1}}).
\end{equation}
This is a bound which corresponds to the point $A$ in the figure.

To get the bound \eqref{r.6} corresponding to one the endpoints 
\begin{equation}\label{r.12}
\|S_k\|_{L^{\frac{2n}{n+1}}(\Hn)\to L^{\frac{2n}{n-3}}(\Hn)}\le C2^{-k}
\end{equation}
which corresponds to the point
$\alpha = (\tfrac{n+1}{2n},\tfrac{n-3}{2n})$ in the figure
we need to interpolate between \eqref{r.11} and \eqref{r.10}.
We first note that
$$\frac{n-3}{2n}=\frac{(n-1)^2}{2n(n+1)}\cdot \theta,$$
with 
$$\theta =\frac{(n+1)(n-3)}{(n-1)^2}.$$
Since for this $\theta$, a calculation shows that
$$0=\frac{n-3}2\cdot (1-\theta)-\frac2{n+1}\cdot \theta,$$
we conclude that \eqref{r.12} does indeed follow from \eqref{r.10} and \eqref{r.11} via interpolation.

From this we obtain all of \eqref{r.6}, since by duality we have from \eqref{r.12} that
\begin{equation}\label{r.13}
\|S_k\|_{L^{\frac{2n}{n+3}}(\Hn)\to L^{\frac{2n}{n-1}}(\Hn)}\le C2^{-k},
\end{equation}
which corresponds to the point $\alpha'$ in the figure and yields \eqref{r.6} for the remaining exponents if we interpolate with \eqref{r.12}.

%
%

\subsubsection{Improved estimates for ${\mathbb H}^3$}

Let us now give the simple argument showing that in three dimensions we can obtain the following improvement
over Theorem~\ref{negresolve}:
\begin{equation}\label{4.19}
\|u\|_{L^s({\mathbb R}^3, dV_{-k})}\le C_{r,s}
\bigl\|\bigl((\Delta_{-\kappa}+\sqrt\kappa +\zeta\bigr)u\bigr\|_{L^r({\mathbb R}^3,dV_{-\kappa})}, 
\, \, \, u\in C^\infty_0, \, \, \zeta \in \C.
\end{equation}
By a straightforward variant of the scaling argument at the end of \S2, these bounds which are uniform
both in $\zeta$ and in the curvature $-\kappa$, $\kappa>0$, would just follow from the special case
where the curvature is $-1$, i.e.,
\begin{equation}\label{4.20}
\|u\|_{L^s({\mathbb H}^3,dV_{{\mathbb H}^3})}\le C_{r,s}\bigl\|\bigl((\Delta_{{\mathbb H}^3}+1+\zeta\bigr)u\bigr\|_{L^r({\mathbb H}^3,dV_{{\mathbb H}^3})}, 
\, \, \, u\in C^\infty_0, \, \, \zeta\in \C.
\end{equation}
In proving this, we may assume that $\zeta=(\la+i\mu)^2$ with $\la\in \R$, $\mu>0$, and then we just use the fact
that the kernel of $\bigl((\Delta_{{\mathbb H}^3}+1+z^2\bigr)^{-1}$ equals
$$\frac{1}{4\pi }\frac1{\sinh (d_{{\mathbb H}^3}(x,y))} e^{(i\la -\mu)d_{{\mathbb H}^3}(x,y)}$$
(see \cite[p. 105]{Taylor}).  Because of this, we obtain \eqref{4.20} via the Hardy-Littlewood-Sobolev inequality
and Young's inequality.

\newsection{Appendix: Proof of the Propositions}

We shall conclude matters by proving Propositions \ref{prop2.1}, \ref{prop2.2}, \ref{prop2.4} and \ref{prop4.1}.  As we noted before,
each is essentially in the literature (e.g., \cite{BSSY}, \cite{DKS},
\cite{KRS}, \cite{SY}, \cite{Sthesis},
\cite{Sef}, and \cite{FIO}).

\subsubsection{Proof of Proposition~\ref{prop2.1}}
Let us start with Proposition~\ref{prop2.1}, which concerns
asymptotics for the kernel of projection onto spherical harmonics of degree $k$,
which involve multiples of the zonal functions on $S^n$.  As we noted,
before, the asymptotics that we require are essentially in the classical
Darboux formula (see \cite{Szego}).

To prove that $H_k(x,y)$ satisfies \eqref{2.11}--\eqref{2.14}, we first
note that the first bound is just a special case of sup-norm
estimates for spectral clusters.  See, e.g. \cite[(3.2.5)--(3.2.6)]{SHang}.

To obtain the off-diagonal assertions in \eqref{2.12}--\eqref{2.14}, we note
that we can write, with $P=\sqrt{-\Delta_{S^n}+(\tfrac{n-1}2)^2}$,
and $\la_k=k+\tfrac{n-1}2$
\begin{multline}\label{5.1}
H_k(x,y)=\frac1{2\pi}\int_{-\infty}^\infty \Hat \rho(t)
e^{i\la_kt} e^{-itP}\, dt
\\
=\frac1\pi \int_{-\infty}^\infty \Hat \rho(t)
e^{i\la_kt} \bigl(\cos tP\bigr)(x,y)\, dt +\rho(\la_k+P)(x,y),
\end{multline}
if $\rho\in C^\infty_0(\R)$ satisfies
$$\rho(\tau)=1, |\tau|\le 1/2, \quad \rho(\tau)=0, \, \, |\tau|\ge 3/4.$$
Since the last term and all of its derivatives or $O_N((1+\la_k)^{-N})$,
for every $N$, it suffices to show that
\begin{equation}\label{5.1}
\tilde H_k(x,y)= \int_{-\infty}^\infty \Hat \rho(t)
e^{i\la_kt} \bigl(\cos tP\bigr)(x,y)\, dt
\end{equation}
is as in \eqref{2.12}--\eqref{2.14}.

If $n$ is odd then $\cos tP$ is $2\pi$-periodic.  Since $\Hat \rho\in 
{\mathcal S}(\R)$, it then follows that if we set
$$\psi_{odd}(t)=\sum_{j\in \Z}\Hat \rho(t-2\pi j),$$
then $\psi_{odd}$ is smooth and $2\pi$-periodic, and, therefore,
\begin{equation}\label{5.3}
\tilde H_k(x,y)=\int_{-\pi}^{\pi} \psi_{odd}(t)e^{i\la_kt}
\, \bigl(\cos tP\bigr)(x,y) \, dt, \quad n \, \text{odd}.
\end{equation}
Similarly, since $\cos tP$ is $4\pi$-periodic when $n$ is even,
if we set 
$$\psi_{even}(t)=\sum_{j\in \Z}\Hat \rho(t-4\pi j),$$
then $\psi_{even}$ is smooth and $4\pi$ periodic and we have
\begin{equation}\label{5.4}
\tilde H_k(x,y)=\int_{-2\pi}^{2\pi} \psi_{even}(t)e^{i\la_kt}
\, \bigl(\cos tP\bigr)(x,y) \, dt, \quad n \, \text{even}.
\end{equation}

To proceed, let us first assume that $n$ is odd.  We then fix
$\eta\in C^{\infty}_0(\R)$ satisfying
$$\eta(t)=1, \, \, |t|\le 1/2, \quad \text{and } \, \, 
\eta(t)=0, \, \, |t|\ge 1.$$
We then can write
\begin{multline*}\tilde H_k(x,y)=\int_{-\pi}^{\pi} \eta(\pi-|t|)\psi_{odd}(t)e^{i\la_kt}
\, \bigl(\cos tP\bigr)(x,y) \, dt
\\
+\int_{-\pi}^{\pi}(1-\eta(\pi-|t|)) \psi_{odd}(t)e^{i\la_kt}
\, \bigl(\cos tP\bigr)(x,y) \, dt.\end{multline*}
The proof of \cite[Lemma 5.1.3]{FIO} shows that the first term
is as in \eqref{2.12}--\eqref{2.13}, 
and, since for odd $n$ $(\cos tP)(x,y)=0$ if $d_{S^n}(x,y)\ne |t|$, $0<|t|<\pi$
(see \cite{Taylor}) the second term vanishes
when $d_{S^n}(x,y)\le \pi-1$, which means that for odd dimensions
we have all but \eqref{2.14} in Proposition~\ref{prop2.1}.  Since
\eqref{2.14} just follows from what we have done and the fact
that $H_k(x,y^*)=(-1)^kH_k(x,y)$, the proof of Proposition~\ref{prop2.1}  for odd $n$ is complete.

The proof for even $n$ is similar.  In this case one splits
\begin{align*}
\tilde H_k(x,y)&=\int_{-2\pi}^{2\pi} \eta(\pi-t)\, \psi_{even}(t)
e^{i\la_kt}\bigl(\cos tP\bigr)(x,y) \, dt
\\
&+\int_{-2\pi}^{2\pi} \eta(\pi+t)\, \psi_{even}(t)
e^{i\la_kt}\bigl(\cos tP\bigr)(x,y) \, dt
\\
&+\int_{-2\pi}^{2\pi}\bigl(1- \eta(\pi-t)-\eta(\pi+t)\bigr) \, \psi_{even}(t)
e^{i\la_kt}\bigl(\cos tP\bigr)(x,y) \, dt,
\end{align*}
and uses the fact that
$$\bigl(\cos tP\bigr)(x,y)=-\bigl(\cos (t+2\pi)P\bigr)(x,y)$$
and
$$\bigl(\cos tP\bigr)(x,y) \, \, \, \text{is smooth } \, \,
\, \text{if } \, \, d_{S^n}(x,y)\ne |t| \, \, \text{mod }\pi.$$
By the latter fact, the first two terms are smooth with derivatives
bounded independent of $\la_k$ if $d_{S^n}(x,y)\le 3\pi/4$.  Using these facts one can also use the proof of \cite[Lemma 5.1.3]{FIO} to see
that, under this assumption, the last term in the decomposition
is as in \eqref{2.12}--\eqref{2.13}.  This implies the first part
of Proposition~\ref{prop2.1} for even $n$, and since, as before,
\eqref{2.14} trivially follows from this, the proof is complete. \qed 

Since the proof of Proposition~\ref{prop2.4} is similar to the above, let us now turn to it.

\subsubsection{Proof of Proposition~\ref{prop2.4}}

Recall that for $\la \ge 1$ the Euclidean resolvent kernels
\begin{align}\label{5.5}
\frac{\text{sgn }\mu}{i(\la+i\mu)}\int_0^\infty &e^{i(\text{sgn }\mu)\la t}
e^{-|\mu| t}\bigl(\cos t\sqrt{-\Delta_{\Rn}}\bigr)(x) \, dt
\\
&=\frac{(2\pi)^{-n}\text{sgn }\mu}{i(\la+i\mu)}
\int_0^\infty 
\int_{\Rn}
e^{i(\text{sgn }\mu)\la t}
e^{-|\mu| t} \, \cos (t|\xi|) \, e^{ix\cdot \xi } \, d\xi dt \notag 
\\
&=(2\pi)^{-n}\int_{\Rn} \frac{e^{ix\cdot \xi}}{-|\xi|^2+(\la +i\mu)^2}\, d\xi
=K_{\la,\mu}(|x|), \notag 
\end{align}
can be written as
\begin{equation}\label{5.6}
K_{\la,\mu}(|x|)=\sum_\pm a_\pm(\la; |x|)e^{\pm i\la |x|} +O(|x|^{2-n}),
\end{equation}
where
\begin{equation}\label{5.7}
|\partial^j_ra_\pm (\la; r)|\le C_j\la^{\frac{n-3}2}r^{-\frac{n-1}2 - j}, \quad
r\ge \la^{-1},
\end{equation}
and
\begin{equation}\label{5.8}
K_{\la,\mu}(|x|)|\le C|x|^{2-n}, \quad |x|\le \la^{-1},
\end{equation}
where the constants in \eqref{5.7}--\eqref{5.8} are independent of $\mu\in R$ and
$\la\ge 1$.  This follows easily from stationery phase, and it also follows from writing
the kernel in terms of Bessel potentials.  See e.g., \cite[p. 338--339]{KRS}.

To prove that when 
$P=\sqrt{-\Delta_{S^n}+(\tfrac{n-1}2)^2}$
$$ 
\frac{\text{sgn }\mu}{i(\la+i\mu)}\int_0^\infty \rho(t) e^{i(\text{sgn }\mu)\la t-|\mu|t} \, \bigl(\cos tP\bigr)(x,y) \, dt
$$
has similar behavior if $0\le \rho\in C^\infty_0(\R)$ satisfies
$\rho(t)=1$, $|t|\le 1/2$ and $\rho(t)=0$, $|t|\ge 1$, we shall use
the Hadamard parametrix (see \cite{Hor3}, \cite{SHang}) which says
that we can write for say, $|t|\le 1$,
$$
\bigl(\cos tP\bigr)(x,y)=(2\pi)^{-n}\int_{\Rn} e^{id_{S^n}(x,y)\1 \cdot
\xi}\alpha(t,x,y;|\xi|) \, \cos t|\xi| \, d\xi,
 $$
 modulo a smooth function, where
 $$ \1 =(1,0\dots,0),$$
and $\alpha \in S^0$ (zero order symbol) satisfies
\begin{equation}\label{5.9}
|\partial^\beta_{t,x,y}\partial_r^j\alpha(t,x,y;r)|\le C_{\beta,j}(1+r)^{-j}.
\end{equation}
In fact, modulo a symbol of order -2, $\alpha$ just equals a smooth
function of $x$ and $y$ which is independent of $|\xi|$.  Plugging this
into \eqref{5.5}, it is easy to see that the top order part of the 
parametrix contributes to a term satisfying the analog of \eqref{5.6}-\eqref{5.8} with $|x|$ being replaced by $d_{S^n}(x,y)$.  The smooth
error term in the parametrix clearly contributes to a term which is
$O(1)$ with bounds independent of $\la\ge 1$.

Let us now give the argument that the full parametrix also gives rise
to a term satisfying the analog of \eqref{5.6}-\eqref{5.8}.  In other
words, if $\alpha$ is as in \eqref{5.9}, we shall show that
when $\la\ge1$
\begin{equation}\label{5.10}
\frac1{\la+i\mu}\int_0^\infty \int_{\Rn}e^{id_{S^n}(x,y)\1\cdot \xi}
\rho(t) e^{i(\text{sgn }\mu)\la t-|\mu|t} \alpha(t,x,y;|\xi|)
\, \cos t|\xi| \, d\xi dt,
\end{equation}
is as in \eqref{5.6}-\eqref{5.8}.  

We first choose $\beta\in C^\infty_0((1/4,2)$ satisfying
$$\beta(r)=1, \quad 1/2\le r\le 3/2.$$
Then, by a simple integration by parts argument, the difference between
\eqref{5.10} and
\begin{equation}\label{5.11}\frac1{\la+i\mu}
\int_0^\infty \int_{\Rn}e^{id_{S^n}(x,y)\1\cdot \xi}
\rho(t) e^{i(\text{sgn }\mu)\la t-|\mu|t} \beta(|\xi|/\la) \,  \alpha(t,x,y;|\xi|)
\, \cos t|\xi| \, d\xi dt,
\end{equation}
is $O((d_{S^n}(x,y))^{2-n})$, independent of $\la \ge1$.  Also, it is
straightforward to see that \eqref{5.11} is $O(\la^{n-2})$, and so
we only need to check that this term is of the desired form
when $d_{S^n}(x,y)\ge \la^{-1}$.

To finish the proof of Proposition~\ref{prop2.4} and show that 
\eqref{5.11} has the desired form, we recall that the Fourier transform
of surface measure on the $(n-1)$-sphere can be written for $|x|\ge1$ as
$$\int_{S^{n-1}}  e^{i\omega\cdot x} \, dS(\omega)
=|x|^{-\frac{n-1}2}\sum_\pm a_\pm(|x|)e^{i\pm |x|},$$
where
$$ |\partial_r^ja_\pm(r)|\le Cr^{-j}, \quad r\ge 1. $$
Writing $\xi=r\omega$ in polar coordinates, we find that for
$d_{S^n}(x,y)\ge \la^{-1}$ we have that \eqref{5.11} can be rewritten
as the sum over $\pm$ of
\begin{multline*}\frac{\la^{\frac{n+1}2}}{\la+i\mu}
\bigl(d_{S^n}(x,y)\bigr)^{-\frac{n-1}2} 
\int_0^\infty \int_0^\infty \beta(r)\rho(t)
e^{i(\text{sgn }\mu)\la t-|\mu|t}e^{\pm i\la r d_{S^n}(x,y)} 
\cos(\la tr) \, r^{\frac{n-1}2} \, dr dt
\\
=\frac{\la^{\frac{n+1}2}}{\la+i\mu} \, \frac{e^{\pm i\la d_{S^n}(x,y)}}{(d_{S^n}(x,y))^{\frac{n-1}2}} \times b_\pm(\la; d_{S^n}(x,y)),
\end{multline*}
where
$$b_\pm(\la; \tau)=  \int_0^\infty \int_0^\infty \beta(r)\rho(t)
e^{i(\text{sgn }\mu)\la t-|\mu|t}e^{\pm i\la (r-1) \tau} 
\cos(\la tr) \, r^{\frac{n-1}2} \, dr dt.$$
Since clearly
$$ b=O(\la^{-1}),$$
and, moreover, a simple integration by parts argument shows that
$$|\partial_\tau^j b(\la; \tau)|\le C_j \la^{-1}\tau^{-j}, \quad \tau \ge \la^{-1}, $$
we conclude that \eqref{5.11} has the desired form, which completes the
proof. \qed

\subsubsection{Proof of Proposition~\ref{prop4.1}}

Since $\Hn$ has bounded geometry it is clear that the proof of Proposition~\ref{prop2.4} just given can be used to show that the kernel of the local
resolvent operator in \eqref{4.5} satisfies 
\eqref{4.6}-\eqref{4.9}.  Note that \eqref{4.7} just follows from Huygens principal.

Alternately, in odd dimensions one could just use the fact that
$$S_0(x,y)=c_n\frac{\text{sgn }\mu}{i(\la+i\mu)}
\Bigl(\frac1{\sinh t}\frac{d}{dt}\Bigr)^{\frac{n-1}2}
\Bigl[\beta_0(t)e^{i(\text{sgn }\mu )\la t -|\mu|t}\Bigr]_{t=d_{\Hn}(x,y)},$$
while in even dimensions one can reach the conclusions of Proposition~\ref{prop4.1}
by using the fact that 
\begin{multline*}
S_0(x,y)=c_n\frac{\sgn \,  \mu}{i(\la+i\mu)}\int_t^\infty 
\frac{\sinh s}{\sqrt{\cosh s-\cosh t}} 
\\ \times \Bigl(\frac{1}{\sinh s}\frac{d}{ds}\Bigr)^{\frac{n}2}
\Bigl[\beta_0(s) \, e^{i(\sgn \mu)\la s-|\mu|s}\Bigr] \, ds,
\quad t=d_{\Hn}(x,y).
\end{multline*}
\qed 

\subsubsection{Proof of Proposition~\ref{prop2.2}}  We need to show that
$$\|T_\la f\|_{L^s(B_1(0))}\le C_{r,s}\|f\|_{L^r(B_1(0))}, \quad \text{supp }f\subset B_1(0),$$
assuming that
$$T_\la f(x)=\int e^{i\la d_g(x,y)}a(x,y)f(y)\, dy,$$
where $a(x,y)$ vanishes if $d_g(x,y)>4$, and, moreover,
\begin{equation}\label{5.12}
|\partial^\alpha_{x,y}a(x,y)|\le C_\alpha \la^{\frac{n-3}2}\bigl(d_g(x,y)\bigr)^{-\frac{n-1}2-|\alpha|},
\quad d_g(x,y)\ge \la^{-1},
\end{equation}
assuming that
\begin{equation}\label{5.13}
|a(x,y)|\le C\bigl(d_g(x,y)\bigr)^{2-n}, \quad d_g(x,y)\le \la^{-1},
\end{equation}
and $r,s$ are as in \eqref{i.1}.  We are also assuming that $g$ is a smooth metric on $\Rn$ with injectivity radius
10 or more, and $B_r(x)$ denotes the geodesic ball of radius $r$ centered at $x$ if, say, $0<r<10$.

To prove these bounds, as before, choose $\beta\in C^\infty_0((1/2,2))$ satisfying
$\sum_{j\in \Z}\beta(2^{-j}t)=1$, $t>0$.  We then set
$$\bigl(T^k_\la f\bigr)(x)=\int e^{i\la d_g(x,y)}\beta(\la 2^{-k}d_g(x,y)) \,
a(x,y)f(y)\, dy,\quad k=1,2,3,\dots .$$
Then if 
$$T^0_\la =T_\la -\sum_{k=1}^\infty T^k_\la ,$$
it follows from \eqref{5.12}-\eqref{5.13}, the Hardy-Littlewood-Sobolev inequality and the fact
that $n(\tfrac1r-\tfrac1s)=2$ that
$$\|T^0_\la f\|_{L^s}\le C_{r,s}\|f\|_{L^r}.$$

Therefore, we would be done if we could show that whenever $r, s$ are as in \eqref{i.1} there is a
$$\sigma_{r,s}>0$$
so that
\begin{equation}\label{5.14}
\|T^k_\la f\|_{L^s}\le C_{r,s} 2^{-k\sigma_{r,s}}\|f\|_{L^s}.
\end{equation}
Since the kernel of $T^k_\la$ vanishes when $d_g(x,y)\notin [\la^{-1}2^{k-1},\la^{-1}2^{k+1}]$, in proving
this we may assume that
\begin{equation}\label{5.15}
\text{supp }f \subset B_{\la^{-1}2^k}(0).
\end{equation}
After a simple change of scale argument, we see that this is equivalent to showing that
\begin{equation}\label{5.16}
\|S^k_\la f\|_{L^s}\le C_{r,s}2^{-k\sigma_{r,s}}\|f\|_{L^s},
\end{equation}
if
$$S^k_\la f(x)=2^{\frac{n-3}2 k}\int e^{i2^k\phi_k(x,y)}b_k(x,y) f(y)\, dy,$$
where
$$\phi_k(x,y)=\e^{-1}d_g(\e x,\e y), \quad \e=\la^{-1}2^k,$$
and
$$b_k(x,y)=2^{\frac{n-1}2 k}\la^{-(n-2)} \, \beta(\phi_k(x,y)) \, a(\la^{-1}2^kx,\la^{-1}2^ky).$$

Note that if $g_\e$ is the ``stretched" metric $g_{ij}(\e x)$ then
$\phi_k(x,y)$ is just the Riemannian distance funtion for this metric with
$\e$ as above, i.e.,
$$\phi_k(x,y)=d_{g_\e}(x,y), \quad \e =\la^{-1}2^k.$$
The amplitude $b_k$ then vanishes if $d_{g_\e}(x,y)\notin [1/2,2]$, and \eqref{5.15} is equivalent to
$\text{supp }f\subset B_{1}(0)$ for this metric if $\e=\la^{-1}2^k$.  
Also, by \eqref{5.12}, the amplitudes satisfy
$$|\partial_{x,y}^\alpha b_k(x,y)|\le C_\alpha,$$
for every multi-index $\alpha$.
Thus, by the special case of
Stein's oscillatory integral theorem~\cite{St}, \eqref{2.26}, we have that
$$\|S^k_\la f\|_{L^q}\le C2^{\frac{n-3}2k-\frac{n}q k}\|f\|_{L^p}, \quad
\text{if } \, 1\le p\le 2, \, \, q=\tfrac{n+1}{n-1}p'.$$
Since the stretched metrics $g_\e$ tend to the constant coefficient metric $g_{ij}(0)$ as
$\e\searrow 0$, it is clear that we can choose the constant $C$ to be independent of $k$.
In particular, if as before
$$p=\tfrac{2n}{n+1}, \quad q=\tfrac{2n(n+1)}{(n-1)^2},$$
which corresponds to the point $A$ in Figure~\ref{fig2}, we have 
$$\|S^k_\la f\|_{L^{\frac{2n(n+1)}{(n-1)^2}}}\le C2^{\frac{n-3}2k-\frac{(n-1)^2}{2(n+1)} k}\|f\|_{L^{\frac{2n}{n+1}}}
=C2^{-\frac{2k}{n+1}}\|f\|_{L^{\frac{2n}{n+1}}}.$$
We also, of course, have the trivial bounds
$$\|S_\la^kf\|_{L^\infty}\le C2^{\frac{n-3}2k}\|f\|_{L^{\frac{2n}{n+1}}}.$$
Since 
$$\frac{n-3}{2n}=\frac{(n-1)^2}{2n(n+1)}\cdot \frac{(n+1)(n-3)}{(n-1)^2},$$
$$1-\frac{(n+1)(n-3)}{(n-1)^2}=\frac4{(n-1)^2},$$
and 
$$0=-\frac{2}{n+1}\cdot \frac{(n+1)(n-3)}{(n-1)^2} + \frac4{(n-1)^2}\cdot \frac{n-3}2,$$
if we interpolate between these two estimates, we conclude that
\begin{equation}\label{5.17}
\|S^k_\la f\|_{L^{\frac{2n}{n-3}}}\le C\|f\|_{L^{\frac{2n}{n+1}}},
\end{equation}
for a uniform constant $C$, which is independent of $k=1,2,\dots$.  The pair of exponents in this inequality
corresponds to the point $\alpha$ in Figure~\ref{fig2}, which is one of the endpoints for the range in \eqref{i.1}.

If we use H\"ormander's oscillatory integral theorem \cite{Hos} (see \cite[Theorem 2.1.1]{FIO}) then we can
also obtain the following estimate for the dual exponents satisfying \eqref{i.1},
\begin{equation}\label{5.18}
\|S^k_\la f\|_{L^{\frac{2n}{n-2}}}\le C2^{\frac{n-3}2k}2^{-(n-1)\frac{n-2}{2n}k}\|f\|_{L^{\frac{2n}{n+2}}}
=C2^{-\frac{k}n}\|f\|_{L^{\frac{2n}{n+1}}}
.
\end{equation}
If we interpolate between \eqref{5.17} and \eqref{5.18} we conclude that we have \eqref{5.14} for some 
$\sigma_{r,s}>0$ provided that $r,s$ are as in \eqref{i.1} and $\tfrac{2n}{n+2}\le r<\frac{2n}{n+1}$.  Since
the conditions on the amplitude $a(x,y)$ are symmetric in $x,y$, by duality, we conclude that the same is true
for the dual range of exponents, and, therefore, we have \eqref{5.14} for all $r,s$ as in \eqref{i.1}, which
concludes the proof of Proposition~\ref{prop2.2}.  \qed


\begin{thebibliography}{MA}
\bibitem{Anker} Anker, Jean-Philippe ;  Pierfelice, Vittoria ;  Vallarino, Maria, {\em The wave equation on hyperbolic spaces},
 J. Differential Equations  {\bf 252}  (2012),   5613--5661.
 \bibitem{BSSY} Bourgain, J.; Shao, P.; Sogge, C. D.; and Yao, X.,
 {\em On $L^p$-resolvent estimates and the density of eigenvalues for compact
 Riemannian manifolds}, Comm. Math. Phy., to appear.
 \bibitem{Chavel} Chavel, Isaac, {\em  Riemannian geometry.
A modern introduction},
Second edition.
Cambridge Studies in Advanced Mathematics, {\bf 98} Cambridge University Press, Cambridge,  2006.
 \bibitem{DKS} Dos Santos Ferreira, David ;  Kenig, Carlos E. ;  Salo, Mikko, {\em On $L^p$ resolvent estimates for Laplace–Beltrami operators on
 compact manifolds},
 Forum Math.  {\bf 26}  (2014),  815--849.
 \bibitem{GLS} Georgiev, Vladimir ;  Lindblad, Hans ;  Sogge, Christopher D.,  {\em Weighted Strichartz estimates and global existence for semilinear wave
 equations},
 Amer. J. Math.  {\bf 119}  (1997),   1291--1319.
 \bibitem{KRS} Kenig, C. E. ;  Ruiz, A. ;  Sogge, C. D.,  {\em Uniform Sobolev inequalities and unique continuation for second order
 constant coefficient differential operators},
 Duke Math. J.  {\bf 55}  (1987),  329--347.
 \bibitem{Hos} H\"ormander, Lars, {\em Oscillatory integrals and multipliers on $FL^p$},
 Ark. Mat.  {\bf 11}, (1973)
 1--11.  
 \bibitem{Hor3} H\"ormander, Lars, {\em  The analysis of linear partial differential operators. III.
Pseudo-differential operators.
Reprint of the 1994 edition},
Classics in Mathematics. Springer, Berlin,  2007.
 \bibitem{SY} Shao, Peng; and Yao, Xiaohua, {\em Uniform Sobolev Resolvent Estimates for the Laplace-Beltrami Operator on Compact Manifolds}, arXiv:1209.5689.
 \bibitem{Shen}   Shen, Z., {\em On absolute continuity of the periodic Schr\"odinger operators}, Internat. Math. Res. Notices {\bf 1} 2001, 1--31.
 \bibitem{Sthesis} Sogge, Christopher D.,  {\em Oscillatory integrals and spherical harmonics},
 Duke Math. J.  {\bf 53}  (1986), 43--65.
 \bibitem{Sef}Sogge, Christopher D.,  {\em   Concerning the $L^p$ norm of spectral clusters for second-order
 elliptic operators on compact manifolds},
 J. Funct. Anal.  {\bf 77}  (1988),  123--138.
\bibitem{FIO} Sogge, Christopher D.,  {\em Fourier integrals in classical analysis},
Cambridge Tracts in Mathematics, {\bf 105} Cambridge University Press, Cambridge,  1993.
\bibitem{SHang} Sogge, Christopher D., {\em Hangzhou lectures on eigenfunctions
of the Laplacian}, Ann. of Math. Stud., {\bf 188}, Princeton Univ. Press, Princeton, NJ,  2014.
\bibitem{St} Stein, E. M.,  {\em Oscillatory integrals in Fourier analysis},
 Beijing lectures in harmonic analysis (Beijing, 1984), Princeton Univ. Press, Princeton, NJ,  1984.
 307--355, Ann. of Math. Stud., {\bf 112}, Princeton Univ. Press, Princeton, NJ,  1986.
 \bibitem{SW} Stein, Elias M. ;  Weiss, Guido, {\em  Introduction to Fourier analysis on Euclidean spaces},
Princeton Mathematical Series, No. 32.
Princeton University Press, Princeton, N.J.,  1971.
 \bibitem{Szego} Szeg\"o, G\'abor, {\em  Orthogonal polynomials.
Fourth edition}
American Mathematical Society, Colloquium Publications, Vol. XXIII.
American Mathematical Society, Providence, R.I.,  1975.
 \bibitem{T} Tataru, Daniel, {\em  Strichartz estimates in the hyperbolic space and global existence for
 the semilinear wave equation},
 Trans. Amer. Math. Soc.  {\bf 353}  (2001),   795--807.
\bibitem{Taylor} Taylor, Michael E., {\em  Partial differential equations II. Qualitative studies of linear
 equations.
Second edition.} 
Applied Mathematical Sciences, {\bf116} Springer, New York,  2011.
\bibitem{Tomas} Tomas, Peter A.,  {\em Restriction theorems for the Fourier transform.
 Harmonic analysis in Euclidean spaces}, (Proc. Sympos. Pure Math.,
 Williams Coll., Williamstown, Mass., 1978), Part 1, 
 pp. 111--114, Proc. Sympos. Pure Math., XXXV, Part, Amer. Math. Soc., Providence, R.I.,  1979.
\end{thebibliography}
\end{document}